\theoremstyle{plain}
\newtheorem{theorem}{Theorem}
\theoremstyle{plain}
\newtheorem{lemma}{Lemma}
\theoremstyle{plain}
\newtheorem{corollary}{Corollary}
\theoremstyle{plain}
\newtheorem{proposition}{Proposition}
\theoremstyle{plain}
\newtheorem{definition}{Definition}
\theoremstyle{plain}
\newtheorem{remark}{Remark}
\title
[Systems of dilations and translations in symmetric spaces]
{Representing systems of dilations and translations in symmetric spaces}
\author[S.~V.~Astashkin]{Sergey V. Astashkin}
\address{Department of Mathematics,
Samara University, Moskovskoye shosse 34, 443086,
Samara, Russia}
\email{astash56@mail.ru}
\author[P.~A.~Terekhin]{Pavel A. Terekhin}
\address{Department of Mechanics and Mathematics,
Saratov State University, Astrakhanskaya Str 83, 410012,
Saratov, Russia}
\email{terekhinpa@mail.ru}
\date{\today}
\subjclass[2010]{Primary 46E30; Secondary 46B70, 42C15, 46B15}
\keywords{sequence of dilations and translations, symmetric space, representing system, tensor product, frame, Lorentz space}
\begin{document}

\begin{abstract}
Let $X$ be an arbitrary separable symmetric space on $[0,1]$. By using a combination of the frame approach and the notion of the multiplicator space $\mathscr{M}(X)$ of $X$ with respect to the tensor product, we
investigate the problem when the sequence of dyadic dilations and translations of a function $f\in X$
is a representing system in the space $X$. The main result reads that this holds whenever $\int_0^1 f(t)\,dt\ne 0$ and $f\in \mathscr{M}(X)$. Moreover, the condition $f\in\mathscr{M}(X)$ turns out to be sharp in a certain sense. In particular, we prove that a decreasing nonnegative function $f$, $f\ne 0$, from a Lorentz space $\varLambda_{\varphi}$ generates an absolutely representing system of dyadic dilations and translations in $\varLambda_{\varphi}$ if and only if $f\in\mathscr{M}(\varLambda_{\varphi})$.
\end{abstract}

\maketitle

\section{Introduction}\label{Intro}
Let $1\le p<\infty$, $f\in L_p=L_p[0,1]$. According to the  result by Filippov and Oswald proved in \cite{FO95} the obvious necessary condition
\begin{equation}\label{main1}
\int_0^1 f(t)\,dt\ne 0
\end{equation}
assures that the sequence $\{f_{k,i}\}$ of dyadic dilations and translations of a function $f\in L_p$ defined by
$$
f_{k,i}(t)=\begin{cases}
f(2^kt-i), & t\in[\frac{i}{2^k},\frac{i+1}{2^k}],\\
0, & {elsewhere},
\end{cases}
\qquad i=0,\dots,2^k-1, \qquad k=0,1,\dots,
$$
is a representing system in the space $L_p$. This means that for every function $x\in L_p$ there is a sequence of coefficients $\{\xi_{k,i}\}$ such that $x=\sum_{k=0}^{\infty}\sum_{i=0}^{2^k-1}\xi_{k,i}f_{k,i}$ with convergence in $L_p$. A key role in the proof of this theorem is played by the following fact, which is proved also in \cite{FO95}: under condition \eqref{main1}, there is a constant $\lambda_0\in\mathbb{R}$ satisfying the inequality
\begin{equation}\label{main2}
\|1-\lambda_0 f\|_{L_p}^p:=\int_0^1 |1-\lambda_0 f(t)|^p\,dt<1.
\end{equation}

The main goal of this paper is to extend the above result to the class of symmetric spaces. By virtue of what has been said before, it would be natural to try to prove an analogue of  condition \eqref{main2} for separable symmetric spaces and then to reach desired result by using the reasoning and techniques of \cite{FO95}. Note, however, that an analogue of \eqref{main2}, which is valid in smooth spaces (see Proposition~\ref{final prop}), is far from being fulfilled in any separable symmetric space (see  Corollary~\ref{l2}, showing that \eqref{main2} does not hold in all Lorentz spaces different from $L_1$).
It made us to find another way basing on using a combination of the frame approach proposed and developed in \cite{Ter04} and the notion of the multiplicator space $\mathscr{M}(X)$ of a symmetric space $X$ with respect to the tensor product introduced and studied in \cite{Ast97}, \cite{AMS03} (for all definitions, see the next section). It is worth to emphasize that an intimate connection (though implicit, because of the obvious equation $\mathscr{M}(L_p)=L_p$, $1\le p<\infty$)
between the problem of representation of functions in symmetric spaces by dilations and translations and the notion of the multiplicator space with respect to the tensor product appears already in the paper \cite{FO95} (see Remark~\ref{Rem2}).

The main result of this paper, Theorem \ref{t2}, shows that for an arbitrary separable symmetric space $X$ the sequence of dyadic dilations and translations of every function $f\in \mathscr{M}(X)$  satisfying condition \eqref{main1} is an absolutely representing system in the space $X$. Clearly,  the Filippov-Oswald theorem, mentioned at the beginning of the Introduction, is an immediate consequence of the last assertion.

Moreover, the condition $f\in\mathscr{M}(X)$ turns out to be sharp in a certain sense. In particular, we prove that a
decreasing nonnegative function $f$, $f\ne 0$, from a Lorentz space $\varLambda_{\varphi}$ generates an absolutely representing system of dyadic dilations and translations in $\varLambda_{\varphi}$ if and only if $f\in\mathscr{M}(\varLambda_{\varphi})$ (see Theorem \ref{c1}). From this it follows the equivalence of the following conditions: (i) each function $f\in\varLambda_{\varphi}$,
$\int_0^1f(t)\,dt\neq0$, generates an absolutely representing system of dilations and translations in the Lorentz space $\varLambda_{\varphi}$ and (ii) the function $\varphi(t)$ is submultiplicative (see Corollary \ref{c2}).

In Theorem~\ref{l2}, it is shown that every frame in a symmetric space with respect to a $\ell^1$-sum of finite-dimensional spaces is not projective. In particular, this result is applicable to systems of dilations and translations.

In conclusion, the Appendix contains a discussion related to  condition \eqref{main2} and as well some remarks concerning to comparing the Weak Greedy Algorithm and the frame approach, which are used in \cite{FO95} and in the present paper, respectively (cf. \cite{Sil08}).

\section{Preliminaries}\label{Prel}

In this section, we shall briefly list the
definitions and notions used throughout this paper.

\textbf{2a. Symmetric spaces.}
For more detailed information related to symmetric spaces, we refer to the monographs \cite{BS,KPS,LT}.

A Banach space $(X,\Vert \cdot\Vert _{_{X}})$ of real-valued
Lebesgue measurable functions (with identification $m $-a.e., where $m $ is the usual Lebesgue measure) on the interval $[0,1]$ is called {\it symmetric} (or {\it rearrangement invariant}) if
\begin {enumerate}
\item[(i).]  $X$ is an ideal lattice, that is, if $y\in X$ and $x$ is any measurable function on $[0,1]$ with $\vert x\vert
\leq \vert y\vert $, then $x\in X$ and $\Vert x\Vert _{_{X}} \leq \Vert y\Vert _{_{X}};$
\item[(ii).]  $X$ is symmetric in the sense that if
 functions $x$ and $y$ are {\it equimeasurable}, i.e.,
$$
m\{u\in [0,1]: |x(u)|>s\}=m\{u\in [0,1]: |y(u)|>s\},\;\;s>0,
$$
and $y\in X$, then $x\in X$ and $\Vert x\Vert _{_{X}} = \Vert y\Vert _{_{X}}$.
\end{enumerate}
\bigskip
\noindent In particular, each measurable function $x(u)$ on $[0,1]$ is equimeasurable with its decreasing, right-continuous rearrangement $x^*(t)$ given by
$$
x^{*}(t):=\inf \{~s\ge 0:\,m\{u\in [0,1]: |x(u)|>s\}\le t~\},\quad t>0.
$$
%where $m $ is the usual Lebesgue measure.

Without loss of generality, we shall assume that any symmetric space  $X$ satisfies the condition $\|\chi_{[0,1]}\|_X=1$, where in what follows $\chi_A$ is the characteristic function of a set $A$. Then, we have $L_\infty [0,1]\subseteq X\subseteq L_1[0,1]$, $\|x\|_{L_1}\le \|x\|_{X}$, $x\in X$, and $\|x\|_{X}\le \|x\|_{L_\infty}$, $x\in L_\infty$  \cite[Theorem~II.4.1]{KPS}.

A function $\psi(t)$, $0\le t\le 1$, is called {\it quasi-concave} if $\psi(t)$ increases, $\psi(t)/t$ decreases, and $\psi(0)=0$.
%The fundamental function $\phi_X$ of a symmetric space $X$ is defined by  $\phi_X(t):=\|\chi_{[0,t]}\|_X$.
%In what follows $\chi_A$ is the characteristic function of a set $A$. 
For every symmetric space $X$ its {\it fundamental function} $\phi_X$, defined by  $\phi_X(t):=\|\chi_{[0,t]}\|_X$,  is quasi-concave \cite[Theorem~II.4.7]{KPS}. 

The {\it K\"othe dual} (or the {\it associated} space) $X'$ of a symmetric space $X$  consists of all measurable functions $y$, for which
$$
\Vert y\Vert _{_{X'}}:=\sup_{\|x\|_X\le1}\langle x,y\rangle<\infty,\;\;\mbox{where}\;\langle x,y\rangle:= \int _{0}^1 x(t)y(t)\,dt.
$$
If $X^*$ denotes the Banach dual of a symmetric space $X$, then $X' \subseteq X^{*}$ and $X'=X^{*}$ if and only if $X$
 is separable. A symmetric space $X$ is said to have the {\it Fatou property} if for every sequence $\{x_n\}_{n=1}^\infty\subset X$ from $x_n(t)\to x(t)$ a.e. on $[0,1]$ and $\sup _n\Vert x_n\Vert _{_{X}} <\infty $ it follows  that $x\in X$ and $\Vert x\Vert_{_{X}}\leq \liminf _{n\to \infty }\Vert x_n\Vert _{_{X}}$. It is well known that a symmetric space $X$ has the Fatou property if and only if the natural embedding of $X$ into its K\"othe bidual $X^{''}$ is a surjective isometry. We have also $\phi_{X'}(t)=t/\phi_X(t)$, $0<t\le1$ (cf. \cite[Chapter~II, (4.39)]{KPS}).

Recall that for $\tau>0$, the
{\it dilation} operator $\sigma_\tau$ is
defined by setting $\sigma_\tau
x(t)=x(t/\tau)\chi_{(0,\min(1,\tau))}(t)$, $0\le t\le 1$. Operators $\sigma_\tau$ are bounded in every symmetric space $X$ and $\|\sigma_\tau\|_{X\to X}\le\max(1,\tau)$, $\tau>0$. The numbers $\alpha_X$ and $\beta_X$ given by
\[\alpha_X:=\lim\limits_{\tau\to
0}\frac{\ln\|\sigma_\tau\|_X}{\ln\tau},\quad
\beta_X:=\lim\limits_{\tau\to
\infty}\frac{\ln\|\sigma_\tau\|_X}{\ln\tau}\]
are called the {\it Boyd indices} of $X$. Always $0\le\alpha_X\leq\beta_X\le 1$ \cite[Chapter~II, \S\,4.3]{KPS}.

Along with the classical $L_p$-spaces important examples of symmetric spaces are Lorentz, Marcinkiewicz and Orlicz spaces.
Let $\varphi(t)$ be an increasing concave function on $[0,1]$, $\varphi(0)=0$. The \textit{Lorentz} space $\varLambda_{\varphi}$
(resp. \textit{ Marcinkiewicz} space $M_{\varphi}$) consists of all measurable functions $x(t)$ on $[0,1]$
such that
$$
\|x\|_{\varLambda_{\varphi}}:=\int_0^1x^*(t)\,d\varphi(t)<\infty\;\;(\mbox{resp.}\; \|x\|_{M_{\varphi}}:=\sup_{0<t\le1}\frac{1}{\varphi(t)}\int_0^tx^*(s)\,ds<\infty).
$$
The space $\varLambda_{\varphi}$ is separable and the space $M_{\varphi}$ is not separable provided that $\lim_{t\to 0}\varphi(t)=0$ (equivalently, $\varLambda_{\varphi}\ne L_\infty$ and $M_{\varphi}\ne L_1$). At the same time, the subspace $M^0_{\varphi}$ of $M_{\varphi}$, consisting of all $x(t)$ such that
$$
\lim_{s\to 0}\frac{1}{\varphi(s)}\int_0^s x^*(t)\,dt=0,$$
is a separable symmetric space. Moreover, $(\varLambda_{\varphi})^*=(\varLambda_{\varphi})'=M_{\varphi}$  and $(M_{\varphi})' = (M^0_{\varphi})^*=\varLambda_{\varphi}$ \cite[Theorems~II.5.2 and II.5.4]{KPS}.

If $\varPhi(t)$ is an increasing convex function on $[0,\infty)$ with $\varPhi(0)=0$, then the \textit{Orlicz}
space $L_{\varPhi}$ is the set of all measurable functions $x(t)$ on $[0,1]$, for which the following Luxemburg norm
$$
\|x\|_{L_{\varPhi}}:=\inf\biggl\{\lambda>0:\int_0^1\varPhi\biggl(\frac{|x(t)|}{\lambda}\biggr)\,dt\le1\biggr\}
$$
is finite.

The notation   $A\asymp B$ will mean that there exist constants $C>0$ and $c>0$ independent of the arguments of $A$ and $B$ such that $c{\cdot}A\le B\le C{\cdot}A$.
Moreover, throughout the paper $\|f\|_p:=\|f\|_{L_p[0,1]}$, $1\le p\le\infty$.

\textbf{2b.  Multiplicator space with respect to the tensor product.}

The boundedness and other properties of the tensor product in symmetric spaces have been studied in papers \cite{O'N,Mil1,Mil2,Mil4,Ast96,Ast97,AMS03}. The notion of the multiplicator space with respect to the tensor product was introduced in \cite{Ast96}.

Let $X = X(I)$ be a symmetric space on $I=[0, 1]$. Then the
corresponding symmetric space $X(I\times I)$ on the square $I\times I$ consists of all measurable functions $x(s, t)$ on $I\times I$ such that $x^{\circledast}(t) \in X(I)$ with the norm $\|x\|_{X(I\times I)}:= \|x^{\circledast}\|_{X(I)}$, where $x^{\circledast}$ denotes the decreasing rearrangement of $|x(s,t)|$ with respect to the Lebesgue measure $m_{2}$ on $I\times I$. For two measurable functions $x(s)$ and $y(t)$ on $I$ we define the bilinear tensor product operator by $(x \otimes y)(s,t):= x(s)y(t)$, $s,t\in I$.

The {\it multiplicator space ${\mathscr M}(X)$} of a symmetric space $X$ on $I$ with respect to the tensor product is the set of all measurable functions $x(s)$
such that the operator $B_xy:=x\otimes y$ is bounded from $X$ into $X(I \times I)$.
${\mathscr M}(X)$ is a symmetric space on $I$, when it is equipped with the natural norm
\begin{equation*}
\| x \|_{{\mathscr M}(X)} := \|B_x\|_{X\to X(I \times I)}.
\end{equation*}

It is clear that $\mathscr{M}(X)=X$ if and only if the tensor product operator
$(x,y)\mapsto x\otimes y$ is a bounded mapping from $X\times X$ into $X(I\times I)$.
Since $\|x\otimes y\|_p=\|x\|_p\|y\|_p$ (by Fubini theorem), then $\mathscr{M}(L_p)=L_p$ for all $1\le p\le\infty$.
%On the other hand, if
%a symmetric space $X$ is separable or has the Fatou property an $\|x\otimes y\|_X\asymp\|x\|_X\|y\|_X$
%we have $X=L_p$ (with equivalence of norms) for some $p%\in[1,\infty]$ \cite[Theorem 7]{AMS03}.
%\textcolor{red}{!!!}.

Here, we list known results, identifying multiplicator spaces for some classes of symmetric spaces (see \cite{Ast96,Ast97,AMS03}):

(i) $\mathscr{M}(\varLambda_{\varphi})=\varLambda_{\varphi}$ if and only if the function $\varphi$ is submultiplicative,
i.e., $\varphi(st)\le C\varphi(s)\varphi(t)$, for some $C>0$ and all $0\le s,t\le1$;

(ii) $\mathscr{M}(M_{\varphi})=M_{\varphi}$ if and only if $\varphi'\otimes\varphi'\in M_{\varphi}(I\times I)$; in particular, the latter condition holds if $\varphi(t)\le C\varphi(t^2)$, $0\le t\le1$;

(iii) $\mathscr{M}(L_{\varPhi})=L_{\varPhi}$ if and only if the function $\varPhi$ is submultiplicative for $t$ large enough, i.e., there is $t_0>0$ such that $\varPhi(st)\le C\varPhi(s)\varPhi(t)$ for all $s,t\ge t_0$.

For every symmetric space $X$ we have $\varLambda_{\varphi}\subset\mathscr{M}(X)\subset L_p$, where $\varphi(t)=\|\sigma_t\|_{X\to X}$, $0<t\le1$,
and $p=1/\alpha_X$, with embedding constants independent of $X$. In particular, $\mathscr{M}(X)=L_{\infty}$
if and only if $\alpha_X=0$ (see \cite{Ast97}).

Observe that $\mathscr{M}(\mathscr{M}(X))=\mathscr{M}(X)$, where symmetric space $X$ is arbitrary \cite[Proposition~1]{AMS03}. Therefore, $\mathscr{M}(X)=X$ whenever
$X=\mathscr{M}(Y)$ for some symmetric space $Y$, However, in general, the embedding $X\subset Y$ does not imply that $\mathscr{M}(X)\subset\mathscr{M}(Y)$ \cite[p.~252]{AMS03}.

%\textcolor{red}{representing system?}

\textbf{2c. Representing systems and frames in Banach spaces.}
A sequence $\{x_n\}_{n=1}^{\infty}$ of elements of a Banach space $X$ is said to be a \textit{representing system}
if for each $x\in X$ we can find a sequence of coefficients $\{\xi_n\}_{n=1}^{\infty}$ such that $x=\sum_{n=1}^{\infty}\xi_nx_n$.

There is a close connection between the latter notion and the following definition of the frame in Banach spaces that was introduced and developed in \cite{Ter04}.

Let $\varDelta$ be a Banach space of sequences $\xi=\{\xi_n\}_{n=1}^{\infty}$ such that the standard unit vectors
$\delta_n$, $n=1,2,\dots$, form a basis
%$\delta_n=\{\delta_{mn}\}_{m=1}^{\infty}$, $n=1,2,\dots$, of ($\delta_{mn}$ --- символ Кронекера) образуют базис
in $\varDelta$.
Then the dual space $\varDelta^*$, clearly, can be identified with the Banach space of sequences $\eta=\{\eta_n\}_{n=1}^{\infty}$ such that
$$
\|\eta\|_{\varDelta^*}:=\sup_{\|\xi\|_{\varDelta}\le1}|\langle\xi,\eta\rangle|<\infty,\;\;\mbox{where}\;
\langle\xi,\eta\rangle:=\sum_{n=1}^{\infty}\xi_n\eta_n.
$$

\begin{definition}\label{d1}
We say that a sequence $\{x_n\}_{n=1}^{\infty}$ of nonzero elements of a Banach space $X$ is a frame in $X$
with respect to $\varDelta$ whenever there exist constants $0<A\le B<\infty$ such that for all $y\in X^*$ the following inequalities hold
\begin{equation}\label{eq2.1}
A\|y\|_{X^*}\le\|\{\langle x_n,y\rangle\}_{n=1}^{\infty}\|_{\varDelta^*}\le B\|y\|_{X^*}.
\end{equation}
\end{definition}
In particular, if $X$ is a Hilbert space and $\varDelta=\ell^2$, we get the definition of the Duffin-Shaeffer frame.
In the general case of Banach spaces this definition is dual with respect to the well-known definitions of the atomic decomposition and the frame due to Gr\"{o}chenig \cite{Gr91} and also to some other close notions (cf. \cite{CHL99}). We emphasize that whenever we talk  in this paper about the frame, it will be understood in the sense of Definition~\ref{d1}.

The mapping $S:\varDelta\to X$ defined by
$$
S\xi:=\sum_{n=1}^{\infty}\xi_nx_n
$$
is called the \textit{synthesis operator}. Respectively, the mapping $R:X^*\to \varDelta^*$,
$$
Ry:=\{\langle x_n,y\rangle\}_{n=1}^{\infty},
$$
is the \textit{analysis operator}. One can easily check that $S^*=R$.

The following result is proved in \cite{Ter10} (see also \cite{Ter04}).

\begin{proposition}\label{p1}
%{\color{red} Reference? \cite{Ter10} (better) or \cite{Ter04}}
A sequence $\{x_n\}_{n=1}^{\infty}\subset X\setminus\{0\}$ is a frame in a Banach space $X$ with respect to $\varDelta$ if and only if the following conditions are satisfied:

$(i)$ for every $\xi=\{\xi_n\}_{n=1}^{\infty}\in\varDelta$ the series $\sum_{n=1}^{\infty}\xi_nx_n$ converges in $X$;

$(ii)$ for every $x\in X$ there is $\xi=\{\xi_n\}_{n=1}^{\infty}\in\varDelta$ such that $x=\sum_{n=1}^{\infty}\xi_nx_n$.
\end{proposition}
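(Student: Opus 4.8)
The strategy is to recast the two conditions in operator-theoretic language: condition $(i)$ is exactly the assertion that the synthesis operator $S\colon\varDelta\to X$, $S\xi=\sum_n\xi_nx_n$, is everywhere defined on $\varDelta$ (and then, automatically, bounded), while condition $(ii)$ is exactly the assertion that $S$ is onto. Once this is set up, the equivalence with \eqref{eq2.1} is read off from the standard dictionary ``$\|S\|=\|S^{*}\|$'' and ``$S$ onto $\iff$ $S^{*}$ bounded below'' (closed range theorem), together with the identity $S^{*}=R$, which holds for any bounded $S$: indeed $\langle S^{*}y,\delta_n\rangle=\langle y,S\delta_n\rangle=\langle x_n,y\rangle$, and since $\{\delta_n\}$ spans a dense subspace of $\varDelta$ this determines $S^{*}y$ as the sequence $Ry=\{\langle x_n,y\rangle\}_n$.

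For the implication ``frame $\Rightarrow(i),(ii)$'' I would start from \eqref{eq2.1} with constants $A\le B$. To obtain $(i)$ I would show that the partial sums $S_N\xi=\sum_{n\le N}\xi_nx_n$ form a Cauchy sequence in $X$: writing $P_{(M,N]}\xi=\sum_{M<n\le N}\xi_n\delta_n$, testing against $y\in X^{*}$ with $\|y\|_{X^{*}}\le1$ and using the \emph{upper} estimate of \eqref{eq2.1} gives
\[
\|S_N\xi-S_M\xi\|_{X}=\sup_{\|y\|_{X^{*}}\le1}\bigl|\langle P_{(M,N]}\xi,\,Ry\rangle\bigr|\le B\,\|P_{(M,N]}\xi\|_{\varDelta},
\]
and $\|P_{(M,N]}\xi\|_{\varDelta}\to0$ because $\{\delta_n\}$ is a basis of $\varDelta$. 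Hence $\sum_n\xi_nx_n$ converges (this is $(i)$), the same estimate with $M=0$ shows $S$ is bounded, and therefore $S^{*}=R$. Now the \emph{lower} estimate of \eqref{eq2.1} says precisely that $R=S^{*}$ is bounded below; by the closed range theorem $\operatorname{ran}S$ is closed and $\operatorname{ran}S={}^{\perp}(\ker R)=X$, i.e.\ $S$ is onto, which is $(ii)$.

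For the converse I would assume $(i)$ and $(ii)$. By $(i)$ the finite-rank operators $S_N$ converge pointwise on $\varDelta$, so $\sup_N\|S_N\xi\|_X<\infty$ for each $\xi$; the uniform boundedness principle then gives $\sup_N\|S_N\|=:C<\infty$, whence $S=\lim_NS_N$ is bounded, $S^{*}=R$, and $\|Ry\|_{\varDelta^{*}}\le C\|y\|_{X^{*}}$ — the upper frame inequality. By $(ii)$ the operator $S$ is onto, so by the open mapping theorem $S$ carries the unit ball of $\varDelta$ onto a set containing some ball $\{z\in X:\|z\|_X<r\}$, and dualizing,
\[
\|Ry\|_{\varDelta^{*}}=\sup_{\|\xi\|_{\varDelta}\le1}|\langle y,S\xi\rangle|\ge\sup_{\|z\|_X<r}|\langle y,z\rangle|=r\,\|y\|_{X^{*}},
\]
the lower frame inequality. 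Since the $x_n$ are nonzero by hypothesis, $\{x_n\}$ is a frame (with $A=r$, $B=C$, and necessarily $A\le B$).

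I expect the only genuinely delicate point to be the matching of the \emph{lower} frame bound with surjectivity of $S$ — not with injectivity — which is where the closed range theorem is essential; this is also the precise sense in which Definition~\ref{d1} is the notion dual to atomic decompositions. Everything else is routine bookkeeping, and in particular neither separability nor reflexivity of $X$ or $\varDelta$ is used anywhere.
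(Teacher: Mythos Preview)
The paper does not actually prove Proposition~\ref{p1}; it merely quotes the result from \cite{Ter10} (see also \cite{Ter04}). So there is no in-paper argument to compare against. That said, your proof is correct and is precisely the standard one: reinterpret the frame inequalities \eqref{eq2.1} as ``$S^{*}=R$ is bounded and bounded below'', and translate this via the closed range theorem into ``$S$ is bounded and onto'', which is exactly the conjunction of $(i)$ and $(ii)$. The two nontrivial ingredients --- that the upper bound in \eqref{eq2.1} forces convergence of $\sum\xi_nx_n$ via the basis property of $\{\delta_n\}$ in $\varDelta$, and that surjectivity of $S$ is equivalent to the lower bound for $S^{*}$ --- are handled correctly, and your remark that neither separability nor reflexivity is needed is accurate. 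This is almost certainly the argument given in the cited references as well.
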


From Proposition \ref{p1} it follows that every frame in a Banach space $X$ (understood as in Definition \ref{d1}) is both a representing system in $X$. The converse holds
as well; each representing system in a Banach space $X$ is also a frame in $X$ with respect to some sequence space, which is defined, in general, in contrast to a basis, in a non-unique way (cf. \cite{Ter04,Ter10}).

% however, in general,
%\textcolor{red}{with respect to various spaces $\varXi$ ???}
%the corresponding sequence space $\varXi$ depends on $X$ {\color{red}(??? No!)} (cf. \cite{Ter04}, \cite{Ter10}).

\textbf{2d. Operator approach to studying systems of dilations and translations.}

Further, we shall make use of the following operator approach to the definition of systems of dilations and translations.

For a function $f\in L_1[0,1]$ we set
$$
V_0f(t)=\begin{cases}
f(2t), & 0\le t\le\frac12,\\
0, & \frac12<t\le1,
\end{cases}
\qquad
V_1f(t)=\begin{cases}
0, & 0\le t<\frac12,\\
f(2t-1), & \frac12\le t\le1.
\end{cases}
$$
Observe that $V_0$ coincides with the dilation operator $\sigma_{1/2}$ and $V_1x$ is a translation of the function
$\sigma_{1/2}x$ for each $x\in L_1[0,1]$. Hence, the operators $V_0$ and $V_1$ are bounded on every symmetric space $X$ and moreover $\|V_i\|_{X\to X}\le1$, $i=0,1$. Also, we define the operator $W$ by
$$
Wf(t)=(V_0+V_1)f(t)=f(2t \mod [0,1]).
$$

Denote
$$
\mathbb{A}=\bigcup_{k=0}^{\infty}\{0,1\}^k,
$$
that is, the family $\mathbb{A}$ consists of all multi-indices $\alpha=(\alpha_1,\dots,\alpha_k)$ such that $\alpha_\nu=0$ or
$1$, $\nu=1,\dots,k$, $k=0,1,2,\dots$ Also, in what follows $|\alpha|(=k)$ is the length of a multi-index $\alpha=(\alpha_1,\dots,\alpha_k)\in\mathbb{A}$,
$\alpha\beta$ is the concatenation of $\alpha=(\alpha_1,\dots,\alpha_k)$ and $\beta=(\beta_1,\dots,\beta_l)$, i.e., the multi-index $(\alpha_1,\dots,\alpha_k,\beta_1,\dots,\beta_l)$,
%{\color{red}
and $\{\xi_\alpha\}_{\alpha\in\mathbb{A}}$ (or $\xi(\alpha)$, $\alpha\in \mathbb{A}$) is a real-valued function defined on $\mathbb{A}$.

Now, setting for any $\alpha=(\alpha_1,\dots,\alpha_k)\in\mathbb{A}$
$$
V^{\alpha}f:=V_{\alpha_1}\dots V_{\alpha_k}f,
% \qquad \alpha=(\alpha_1,\dots,\alpha_k)\in\mathbb{A},
$$
%where $k=|\alpha|$ and $i=\sum_{\nu=1}^k\alpha_{\nu}2^{k-\nu}$,
we get the system of dyadic dilations and translations of a function $f$, which will be denoted further by $\{V^{\alpha}f\}_{\alpha\in\mathbb{A}}$. Clearly, in the usual notation, $V^{\alpha}f:=f_{k,i}$, where $k=|\alpha|$ and $i=\sum_{\nu=1}^k\alpha_{\nu}2^{k-\nu}$.
In particular, $\{V^{\alpha}1\}_{\alpha\in\mathbb{A}}$ (i.e., when $f(t)\equiv 1$)
% $\{V^{\alpha}1\}_{\alpha\in\mathbb{A}}$
is just the sequence of characteristic functions of dyadic intervals $I_{\alpha}=[{i}{2^{-k}},(i+1){2^{-k}}]$,
$i=0,1,\dots,2^k-1$, $k=0,1,2,\dots$ In turn, for the function
$$
h(t)=\begin{cases}
1, & 0<t<\frac12,\\
-1, & \frac12<t<1,
\end{cases}
$$
the system $\{V^{\alpha}h\}_{\alpha\in\mathbb{A}}$ coinsides with the classical Haar system normed in $L_{\infty}$ (without the first function equal to
$1$).

It turns out that certain conditions allow to compare norms of linear combinations of the functions $V^{\alpha}f$, $|\alpha|=k$, and $V^{\alpha}1$, $|\alpha|=k$, with a fixed $k\in \mathbb{N}$. Specifically, the condition
$f\in\mathscr{M}(X)$ is sufficient (and necessary if $X$ is separable) for the inequality
\begin{equation}\label{eq2.2}
\biggl\|\sum_{|\alpha|=k}\xi_{\alpha}V^{\alpha}f\biggr\|_X\le C\biggl\|\sum_{|\alpha|=k}\xi_{\alpha}V^{\alpha}1\biggr\|_X.
\end{equation}
to hold, for some constant
$C>0$ and all $\xi_{\alpha}\in\mathbb{R}$, $|\alpha|=k$, $k=0,1,\dots$ Moreover, we can take in \eqref{eq2.2} $C=\|f\|_{\mathscr{M}(X)}$. 

Indeed, the function $U_f:=\sum_{|\alpha|=k}\xi_{\alpha}V^{\alpha}f$
is equimeasurable with the tensor product of $f$ and the step function $U=\sum_{|\alpha|=k}\xi_{\alpha}V^{\alpha}1$,
because for each $\tau>0$
\begin{align*}
m\{t\in[0,1]:|U_f(t)|>\tau\}=\sum_{|\alpha|=k}m\{t\in I_{\alpha}:|\xi_{\alpha}W^kf(t)|>\tau\} \\
=\frac{1}{2^k}\sum_{|\alpha|=k}m\{t\in[0,1]:|\xi_{\alpha}f(t)|>\tau\}
=m_2\{s,t\in[0,1]:|f(s)U(t)|>\tau\}.
\end{align*}
Therefore, $\|U_f\|_X=\|f\otimes U\|_{X(I\times I)}\le \|f\|_{\mathscr{M}(X)}\|U\|_X$ and we get \eqref{eq2.2}.

Conversely, if inequality \eqref{eq2.2} holds, or equivalently $\|f\otimes U\|_{X(I\times I)}\le C\|U\|_X$ for each step function $U$, then, assuming that $X$ is separable, we easily have $\|f\otimes x\|_{X(I\times I)}\le C\|x\|_X$ for all $x\in X$, whence $\|f\|_{\mathscr{M}(X)}\le C$.

On the other hand, one can easily see (cf. \cite[Theorem 6]{AMS03}) that the opposite inequality to \eqref{eq2.2} holds for every function $f\in X$, $f\neq0$. More precisely, there is a constant $c_f>0$ such that for all $k=0,1,\dots$ and
$\xi_{\alpha}\in\mathbb{R}$, $|\alpha|=k$, we have
\begin{equation}\label{eq2.3}
c_f\biggl\|\sum_{|\alpha|=k}\xi_{\alpha}V^{\alpha}1\biggr\|_X\le\biggl\|\sum_{|\alpha|=k}\xi_{\alpha}V^{\alpha}f\biggr\|_X.
\end{equation}

Further, we shall repeatedly use estimates \eqref{eq2.2} and \eqref{eq2.3}.

%(см. доказательство \cite[Theorem 6]{AMS03}).

\section{Frames in symmetric spaces}\label{Frame}

Let $X$ be a symmetric space on $[0,1]$, $f\in X$. Denote by $X_{k,f}$ the linear span in $X$ of the set of dilations and translations of $f$ supported on the dyadic intervals of rank $k$, i.e., $X_{k,f}:=\text{span}(\{V^{\alpha}f\}_{|\alpha|=k})$. Then, the normalization condition
\begin{equation}\label{eq3.1}
\langle f,1\rangle=\int_0^1f(t)\,dt=1
\end{equation}
assures that, for each $k=0,1,\dots$, the operator $P_{k,f}$, defined by
$$
P_{k,f}x:=2^k\sum_{|\alpha|=k}\langle x,V^{\alpha}1\rangle V^{\alpha}f,
$$
is a projection from the space $X$ onto $X_{k,f}$.
In particular, if $f\equiv 1$, we get the subspace of dyadic step functions of rank $k$ and the classical average projection that will be denoted by $X_k$ and $P_k$, respectively. In this special case we have $X_k\subset X_{k+1}$ and $\|P_k\|_{X\to X}=1$ (see e.g. \cite[Ch.~II, \S\,3-4]{KPS}). Moreover, if a symmetric space $X$ is separable, the sequence $\{P_kx\}_{k=1}^\infty$ converges in norm to $x$ for each $x\in X$ \cite[Theorem~II.4.3]{KPS}.

In the case when $f$ is an arbitrary function, the sequence of  norms $\|P_{k,f}\|_{X\to X}$ does not decrease and, in general, may be unbounded. It is worth to mention that for every separable symmetric space $X$ the condition
\begin{equation}\label{eq3.2}
\sup_{k\ge0}\|P_{k,f}\|_{X\to X}<\infty
\end{equation}
is equivalent to the fact that $f$ belongs to the multiplicator $\mathscr{M}(X)$ \cite[Corollary~2]{AMS03}. In the case when $X$ is an arbitrary symmetric space, we have only the implication: from $f \in\mathscr{M}(X)$ it follows \eqref{eq3.2} \cite[Theorem~2(i)]{AMS03}.

It turns out that the projections $P_{k,f}$ like $P_k$ possess approximate properties but now with respect to the weak topology of a separable symmetric space.

\begin{lemma}\label{l1}
Let $X$ be a separable symmetric space and let a function $f\in X$ satisfy condition \eqref{eq3.1}.
The following conditions are equivalent:

$(i)$ the functions $P_{k,f}x$ converge weakly to $x$ for each $x\in X$;

$(ii)$ $f\in\mathscr{M}(X)$.
\end{lemma}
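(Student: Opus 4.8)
The plan is to prove the two implications separately, exploiting the fact that $\mathscr{M}(X)$ being separable is \emph{not} assumed (so we work with $X$ separable and hence $X^* = X'$).

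\textbf{From $(ii)$ to $(i)$.} Suppose $f\in\mathscr{M}(X)$. By the remark following \eqref{eq3.2} (i.e. \cite[Theorem~2(i)]{AMS03}), we have $\sup_{k\ge 0}\|P_{k,f}\|_{X\to X} < \infty$. By the uniform boundedness principle it therefore suffices to verify weak convergence $P_{k,f}x \to x$ on a dense subset of $X$; I would take the set $\bigcup_k X_k$ of dyadic step functions, which is dense in the separable space $X$. So fix $x \in X_m$ for some $m$. For $k \ge m$ one computes $P_{k,f}x$ explicitly: writing $x = \sum_{|\beta|=m}\langle x, V^\beta 1\rangle\, 2^m V^\beta 1$ and using that $V^\beta 1 = \sum_{|\gamma|=k-m} V^{\beta\gamma}1$ together with the multiplicativity relations $\langle V^{\beta\gamma}1, V^\alpha 1\rangle = 0$ unless $\alpha = \beta\gamma$, one sees that $P_{k,f}x = \sum_{|\beta|=m}\langle x, V^\beta 1\rangle\, 2^m \sum_{|\gamma|=k-m} V^{\beta\gamma}f$. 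Now for $y \in X^* = X'$, the pairing $\langle P_{k,f}x, y\rangle$ reduces, on each dyadic block $I_\beta$, to an average of $y$ against the $(k-m)$-th level dilation/translation family of $f$; since $\int_0^1 f = 1$ and $y\in L_1$, these averages converge to $\langle x, y\rangle$. Concretely, on $I_\beta$ one is looking at $2^{k-m}\sum_{|\gamma|=k-m}\langle V^{\beta\gamma}f, y\rangle$, and a change of variables turns this into $\int_{I_\beta} (P_{k-m}^{(\beta)} y)(t)\,dt$ up to the factor $\int f$, where $P_{k-m}^{(\beta)}$ is the average projection localized to $I_\beta$; the $L_1$-convergence $P_j y \to y$ (valid for $y\in L_1$, not just $y \in X$) does the job. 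This is the step that needs care but is ultimately a routine, if slightly tedious, rearrangement computation.

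\textbf{From $(i)$ to $(ii)$.} This is the direction I expect to be the real obstacle, since we must manufacture the tensor-product boundedness of $f$ out of weak convergence. I would argue by contradiction, or rather extract quantitative information: assume $(i)$ holds. The Banach--Steinhaus theorem applied to the sequence of bounded operators $P_{k,f}: X \to X$ (each is bounded since $x \in X_{k,f}$ and these are finite-dimensional target issues — but one must first know $\|P_{k,f}\|_{X\to X} < \infty$ for each fixed $k$, which holds because $V^\alpha f \in X$) — wait, more carefully: $(i)$ says $P_{k,f}x \to x$ weakly for every $x$, so in particular $\sup_k |\langle P_{k,f}x, y\rangle| < \infty$ for all $x \in X$, $y \in X^*$; by two applications of uniform boundedness (first in $y$, then in $x$) we get $\sup_k \|P_{k,f}\|_{X\to X} < \infty$, i.e. \eqref{eq3.2}. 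Then invoke \cite[Corollary~2]{AMS03}, which states precisely that for separable $X$, condition \eqref{eq3.2} is equivalent to $f \in \mathscr{M}(X)$. This closes the loop.

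\textbf{Summary of structure.} The proof is short modulo the cited results: $(ii)\Rightarrow(i)$ uses \cite[Theorem~2(i)]{AMS03} for uniform boundedness plus a density argument and an explicit block computation reducing to $L_1$-convergence of average projections; $(i)\Rightarrow(ii)$ uses the uniform boundedness principle to derive \eqref{eq3.2} and then \cite[Corollary~2]{AMS03}. The one genuinely new piece of work is the density-plus-computation in $(ii)\Rightarrow(i)$; everything else is an assembly of facts already recorded in Section~\ref{Prel} and Section~\ref{Frame}. I would be careful to note that weak convergence on the dense set combined with uniform boundedness gives weak convergence everywhere — this is standard but worth one sentence, since weak convergence (unlike norm convergence) does pass to closures under uniform operator bounds via the estimate $|\langle P_{k,f}x - x, y\rangle| \le |\langle P_{k,f}(x - x_0), y\rangle| + |\langle P_{k,f}x_0 - x_0, y\rangle| + |\langle x_0 - x, y\rangle|$ for $x_0$ close to $x$.
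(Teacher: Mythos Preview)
Your treatment of $(i)\Rightarrow(ii)$ is correct and coincides with the paper's: weak convergence forces $\sup_k\|P_{k,f}x\|_X<\infty$ for every $x$, hence $\sup_k\|P_{k,f}\|_{X\to X}<\infty$ by the Uniform Boundedness Principle, and then \cite[Corollary~2]{AMS03} gives $f\in\mathscr{M}(X)$.

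For $(ii)\Rightarrow(i)$ your overall architecture --- uniform operator bound, density of dyadic step functions, explicit reduction to the case $x=V^\beta1$ --- is exactly the paper's. The gap is in the step you yourself flag. After the reduction one must show
\[
\langle V^\beta W^{j}f,\,y\rangle\;\longrightarrow\;\langle V^\beta1,\,y\rangle
\quad(j\to\infty),\qquad y\in X'=X^*\subset L_1.
\]
Your claim that a change of variables turns the left side into $\int_{I_\beta}(P_{j}^{(\beta)}y)\,dt$ times $\int f$ is false as an identity: with $f=2\chi_{[0,1/2]}$, $\beta$ empty, $j=1$, $y=\chi_{[1/4,1/2]}$ one gets $\langle Wf,y\rangle=0$ while $\int y=1/4$. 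What \emph{is} true is the decomposition
\[
\langle W^jf,\tilde y\rangle
=\langle W^jf,\,P_j\tilde y\rangle+\langle W^jf,\,\tilde y-P_j\tilde y\rangle
=\Bigl(\int f\Bigr)\Bigl(\int\tilde y\Bigr)+\langle W^jf,\,\tilde y-P_j\tilde y\rangle,
\]
but the remainder can be estimated only by $\|f\|_X\,\|\tilde y-P_j\tilde y\|_{X^*}$ (which need not tend to $0$, since $X^*$ is not assumed separable --- think $X=L_1$, $X^*=L_\infty$) or by $\|f\|_\infty\,\|\tilde y-P_j\tilde y\|_1$ (which requires $f\in L_\infty$, not assumed). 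So the ``$L_1$-convergence $P_jy\to y$'' does not close the argument.

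The paper resolves this by approximating on the \emph{other} side: it first shows $P_{k,f}1=W^kf\to1$ weakly by applying the classical Fej\'er lemma to $y\in L_1$ and the bounded function $P_if\in L_\infty$, and then lets $i\to\infty$ using $\|P_if-f\|_X\to0$ (here is where separability of $X$, not of $X^*$, is spent). The identity $P_{k,f}V^\beta=V^\beta P_{k-|\beta|,f}$ then transfers this to all dyadic step functions. In short, the asymmetry matters: the density one has is in $X$, so the approximation must be performed on $f$, not on $y$.
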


\begin{proof}
Implication $(i)\Rightarrow(ii)$ is almost immediate. Indeed, from $(i)$ it follows that for any $x\in X$
$$
\sup_{k\ge0}\|P_{k,f}x\|_X<\infty.
$$
Therefore, by Uniform Boundedness Principle, we have \eqref{eq3.2}. As was observed above, this implies that $f\in\mathscr{M}(X)$.

$(ii)\Rightarrow(i)$. Suppose $f\in\mathscr{M}(X)$. We claim that the functions $P_{k,f}1$ converge weakly to $1$ as $k\to\infty$.

Firstly, observe that
$$
P_{k,f}1=\sum_{|\alpha|=k}V^{\alpha}f=W^kf,
$$
where as above
$$
Wf(t)=(V_0+V_1)f(t)=f(2t \mod [0,1]).
$$
%We claim that the functions $W^kf$ weakly converge to $1$.
Further, let us recall the following classical Fejer lemma (see e.g. \cite[\S~20]{Bar}): for any $1$-periodic functions $y\in L^1$ and $z\in L^{\infty}$ it holds
$$
\lim_{n\to\infty}\int_0^1y(t)z(nt)\,dt=\int_0^1y(t)\,dt\int_0^1z(t)\,dt.
$$
%{\color{red}
Since $X$ is separable, then $X^*=X'\subset L^1$. Therefore, applying this relation to every $y\in X^*$ and $z=P_if\in L^{\infty}$, with a fixed $i=1,2,\dots$, we have
$$
\lim_{k\to\infty}\int_0^1y(t)W^k(P_if)(t)\,dt=\int_0^1y(t)\,dt\int_0^1P_if(t)\,dt,\;\;i=1,2,\dots
$$
From normalization condition \eqref{eq3.1} it follows
$$
\int_0^1P_if(t)\,dt=\int_0^1f(t)\,dt=1,\;\;i=1,2,\dots,$$
and hence the left-hand side of the preceding equation does not depend on $i$. On the other hand, since $\|P_if-f\|_X\to 0$ as $i\to\infty$, we easily get
$$
\lim_{i\to\infty}\int_0^1y(t)W^k(P_if)(t)\,dt=\int_0^1y(t)W^kf(t)\,dt,\;\;k=0,1,2,\dots$$
%Moreover, by normalization condition \eqref{eq3.1}, we have
%$$
%\int_0^1P_if(t)\,dt=\int_0^1f(t)\,dt=1,\;\;i=1,2,\dots$$
Combining these relations, we conclude that
%Положим $n=2^k$, возьмем $y\in X^*\subset L^1$ и аппроксимируем $f$ по норме пространства $X$ функциями $z_k=P_kf\in L^{\infty}$. В итоге с учетом нормировки \eqref{eq3.1} получим
$$
\lim_{k\to\infty}\langle W^kf,y\rangle=\langle1,y\rangle
$$
for every $y\in X^*$. Equivalently, $P_{k,f}1\to 1$ weakly as $k\to\infty$, and so our claim is proved.
%Тем самым слабая сходимость $P_{k,f}1=W^kf$ к $1$ показана.

Further, let $k,l\in\mathbb{N}$, $l\le k$, and let $\beta$ be a multi-index such that $|\beta|=l$. Since $\langle V^{\beta}x,V^{\alpha}1\rangle=2^{-l}\langle x,V^{\gamma}1\rangle$ if $\alpha=\beta\gamma$, $|\gamma|=k-l$, and $\langle V^{\beta}x,V^{\alpha}1\rangle=0$, otherwise, we have
$$
P_{k,f}V^{\beta}x=2^k\sum_{|\alpha|=k}\langle V^{\beta}x,V^{\alpha}1\rangle V^{\alpha}f
=2^{k-l}\sum_{|\gamma|=k-l}\langle x,V^{\gamma}1\rangle V^{\beta\gamma}f=V^{\beta}P_{k-l,f}x.
$$
%Теперь проверим равенство
%$$
%P_{k,f}V^{\beta}x=V^{\beta}P_{k-l,f}x, \qquad |\beta|=l\le %k.
%$$
%В самом деле, имеем
%$$
%P_{k,f}V^{\beta}x=2^k\sum_{|\alpha|=k}\langle V^{\beta}x,V^{\alpha}1\rangle V^{\alpha}f
%=2^{k-l}\sum_{|\gamma|=k-l}\langle x,V^{\gamma}1\rangle V^{\beta\gamma}f=V^{\beta}P_{k-l,f}x,
%$$
%так как $\langle V^{\beta}x,V^{\alpha}1\rangle=2^{-l}\langle x,V^{\gamma}1\rangle$ при $\alpha=\beta\gamma$, $|\gamma|=k-l$,
%и $\langle V^{\beta}x,V^{\alpha}1\rangle=0$ во всех остальных случаях.
Therefore, for every dyadic step function
$x=\sum_{|\beta|=l}\xi_{\beta}V^{\beta}1$, $l=0,1,\dots$ and $k\in\mathbb{N}$ such that $l\le k$ it holds
$$
P_{k,f}x=\sum_{|\beta|=l}\xi_{\beta}V^{\beta}P_{k-l,f}1.
$$
From the fact that $P_{k-l,f}1$  converge weakly to  $1$ as $k\to\infty$, we deduce weak convergence of the functions $V^{\beta}P_{k-l,f}1$  to  $V^{\beta}1$ for each multi-index $\beta$. Thus, $P_{k,f}x\to x$ weakly for each dyadic step function $x\in X$. Moreover, from the hypothesis $f\in\mathscr{M}(X)$ it follows that condition \ref{eq3.2} holds. As a result, since $X$ is separable, the set of dyadic step functions is dense in $X$ and hence $P_{k,f}x\to x$ weakly for every $x\in X$.
\end{proof}

In what follows, $\varXi_k$, $k=0,1,\dots$, are coordinate spaces of dimension $2^k$, whose elements are sequences $\xi=\{\xi_{\alpha}\}_{|\alpha|=k}$ of reals. When  $\varXi_k$ is equipped with the norm
$$
\|\xi\|_{\varXi_k}:=\biggl\|\sum_{|\alpha|=k}\xi_{\alpha}V^{\alpha}1\biggr\|_X,
$$
it is clearly isometric to the subspace $X_k$ of dyadic step functions of rank $k$ in $X$.

Our first result shows a direct link between systems of dilations and translations in a separable symmetric space $X$ and frames in $X$ (see Definition~\ref{d1}) with respect to a suitable sequence space.

\begin{theorem}\label{t1}
Let $X$ be a separable symmetric space and let a function  $f\in X$ satisfy condition \eqref{main1}. Suppose also that $f\in\mathscr{M}(X)$.
% satisfy condition \eqref{main1}.
%$\int_0^1f(t)\,dt\neq0$.

Then, the system of dilations and translations of $f$ is a frame in $X$ with respect to the Banach sequence space
$$
\varXi=\biggl(\bigoplus_{k=0}^{\infty}\varXi_k\biggr)_{\ell^1}.
$$
\end{theorem}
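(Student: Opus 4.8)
The plan is to verify the two-sided estimate \eqref{eq2.1} of Definition~\ref{d1} directly for the space $\varXi$, obtaining the frame bounds $A=1$ and $B=\|f\|_{\mathscr{M}(X)}$; Proposition~\ref{p1} is not needed for this (it gives an alternative route, noted at the end). First I would pass to the normalized case: since $\int_0^1 f\,dt\ne0$ by \eqref{main1}, replace $f$ by $f/\int_0^1 f\,dt$, so that \eqref{eq3.1} holds. This multiplies every $V^\alpha f$ by one and the same nonzero scalar, and leaves both the space $\varXi$ (which is built from the functions $V^\alpha 1$, not from $f$) and the membership $f\in\mathscr{M}(X)$ unchanged; hence it affects neither the validity of \eqref{eq2.1} nor the ratio $B/A$. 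Also $f\ne0$, so all $V^\alpha f\ne0$, as required by Definition~\ref{d1}.

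Since $\varXi=\bigl(\bigoplus_{k\ge0}\varXi_k\bigr)_{\ell^1}$ with each $\varXi_k$ finite-dimensional, its dual is $\varXi^*=\bigl(\bigoplus_{k\ge0}\varXi_k^*\bigr)_{\ell^\infty}$, realized as sequences $\eta=\{\eta_\alpha\}_{\alpha\in\mathbb{A}}$ with $\|\eta\|_{\varXi^*}=\sup_{k\ge0}\|\{\eta_\alpha\}_{|\alpha|=k}\|_{\varXi_k^*}$. Using the isometry $\varXi_k\cong X_k$, $\xi\mapsto\sum_{|\alpha|=k}\xi_\alpha V^\alpha 1$, the analysis operator $Ry=\{\langle V^\alpha f,y\rangle\}_{\alpha\in\mathbb{A}}$, $y\in X^*=X'$, then satisfies
\[
\|Ry\|_{\varXi^*}=\sup_{k\ge0}\ \sup\Bigl\{\,\bigl|\langle g,y\rangle\bigr|\ :\ g=\textstyle\sum_{|\alpha|=k}\xi_\alpha V^\alpha f,\ \bigl\|\sum_{|\alpha|=k}\xi_\alpha V^\alpha 1\bigr\|_X\le1\,\Bigr\}.
\]
The upper bound is then immediate from \eqref{eq2.2}: if $\|\sum_{|\alpha|=k}\xi_\alpha V^\alpha 1\|_X\le1$, then $\|g\|_X\le\|f\|_{\mathscr{M}(X)}$, hence $|\langle g,y\rangle|\le\|f\|_{\mathscr{M}(X)}\|y\|_{X^*}$, and so $\|Ry\|_{\varXi^*}\le\|f\|_{\mathscr{M}(X)}\|y\|_{X^*}$.

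The lower bound is the substantive step, where the hypothesis $f\in\mathscr{M}(X)$ enters a second time via Lemma~\ref{l1}. Fix $y\in X^*$ and $\varepsilon>0$, and choose $x\in X$ with $\|x\|_X\le1$ and $\langle x,y\rangle>\|y\|_{X^*}-\varepsilon$. For every $k$, the function $P_{k,f}x=\sum_{|\alpha|=k}\xi^{(k)}_\alpha V^\alpha f$ with $\xi^{(k)}_\alpha:=2^k\langle x,V^\alpha 1\rangle$ is an admissible competitor in the supremum above, because $\sum_{|\alpha|=k}\xi^{(k)}_\alpha V^\alpha 1=P_kx$ and so $\|\sum_{|\alpha|=k}\xi^{(k)}_\alpha V^\alpha 1\|_X=\|P_kx\|_X\le\|x\|_X\le1$ (recall $\|P_k\|_{X\to X}=1$). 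Hence $|\langle P_{k,f}x,y\rangle|\le\|Ry\|_{\varXi^*}$ for all $k$. Since $\langle f,1\rangle=1$ and $f\in\mathscr{M}(X)$, Lemma~\ref{l1} gives $\langle P_{k,f}x,y\rangle\to\langle x,y\rangle$ as $k\to\infty$, so $\|Ry\|_{\varXi^*}\ge|\langle x,y\rangle|>\|y\|_{X^*}-\varepsilon$; letting $\varepsilon\downarrow0$ gives $\|Ry\|_{\varXi^*}\ge\|y\|_{X^*}$. Combined with the upper bound, this is exactly \eqref{eq2.1} with $A=1$ and $B=\|f\|_{\mathscr{M}(X)}$ (the required $A\le B$ being automatic), so $\{V^\alpha f\}_{\alpha\in\mathbb{A}}$ is a frame in $X$ with respect to $\varXi$. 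I expect the only real obstacle to be this coupling — recognizing that the functions $P_{k,f}x$, whose weak convergence to $x$ comes for free from Lemma~\ref{l1}, are precisely the synthesis images of $\varXi_k$-vectors whose norm is dominated by $\|x\|_X$; the rest is bookkeeping with \eqref{eq2.2} and the $\ell^1$-structure of $\varXi$. (Alternatively one could invoke Proposition~\ref{p1}: its condition~$(i)$ follows from \eqref{eq2.2} since $\sum_k\|\sum_{|\alpha|=k}\xi_\alpha V^\alpha f\|_X\le\|f\|_{\mathscr{M}(X)}\|\xi\|_\varXi<\infty$, while condition~$(ii)$ — surjectivity of the synthesis operator — follows because, by Lemma~\ref{l1} together with Mazur's theorem, every $x\in X$ lies in the norm closure of the convex set $\{S\xi:\|\xi\|_\varXi\le\|x\|_X\}$, and then the open mapping lemma applies.)
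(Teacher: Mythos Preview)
Your proof is correct and follows essentially the same route as the paper: both identify $\|Ry\|_{\varXi^*}=\sup_{k}\|P_{k,f}^*y\|_{X^*}$, use \eqref{eq2.2} for the upper bound, and Lemma~\ref{l1} for the lower one. The only cosmetic difference is that the paper argues on the dual side (computing $P_{k,f}^*y$ explicitly and invoking weak$^*$ lower semicontinuity of the norm), whereas you stay on the primal side by testing against a near-norming $x$ and using $\langle P_{k,f}x,y\rangle\to\langle x,y\rangle$ directly; these are equivalent, and your normalization $\langle f,1\rangle=1$ is a harmless simplification.
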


\begin{proof}
We need to show that for some $0<A\le B<\infty$ and all $y\in X^*$
\begin{equation}\label{eq3.4a}
A\|y\|_{X^*}\le\|\{\langle V^{\alpha}f,y\rangle\}_{\alpha\in\mathbb{A}}\|_{\varXi^*}\le B\|y\|_{X^*}.
\end{equation}
Firstly, since $\varXi$ is the $\ell^1$-sum of the spaces $\varXi_k$, the dual space $\varXi^*$ is the $\ell^{\infty}$-sum of the dual spaces $(\varXi_k)^*$, i.e.,
$$
\varXi^*=\biggl(\bigoplus_{k=0}^{\infty}(\varXi_k)^*\biggr)_{\ell^{\infty}}.
$$
Secondly, if $(\varXi^*)_k$ is the coordinate space, corresponding to the symmetric space $X^*$ (recall that $X$ is separable and so $X^*=X'$), then
\begin{equation}\label{sequence}
\|\eta\|_{(\varXi_k)^*}=\|2^k\eta\|_{(\varXi^*)_k}, \qquad k=0,1,\dots.
\end{equation}

Indeed, one can easily check that for every $u\in X$, $v\in X^*$ and $k\in\mathbb{N}$ 
$$
\langle u,P_kv\rangle = \langle P_ku,v\rangle.
$$
Therefore, since $\|P_k\|_{X\to X}=1$ and for any step function $y=\sum_{|\alpha|=k}\eta_{\alpha}V^{\alpha}1$ it holds $y=P_ky$, we get
%из подпространства $(X^*)_k$
%экстремальной функцией будет снова ступенчатая функция $x\in X_k$ того же двоичного ранга $k$, т.е.
$$
\|y\|_{X^*}=\sup_{x\in X,\,\|x\|_{X}\le1}|\langle x,y\rangle|=\sup_{x\in X_k,\,\|x\|_{X}\le1}|\langle x,y\rangle|.
$$
Here, as above, $X_k$ is the subspace of  dyadic step functions supported on dyadic intervals of rank $k$.
%Для обоснования последнего равенства надо учесть, что $y=P_ky$, $P_k^*=P_k$ и $\|P_kx\|_{X}\le\|x\|_X$.
Combining this together with the fact that
$\langle x,y\rangle=2^{-k}\langle\xi,\eta\rangle$ for any function $x=\sum_{|\alpha|=k}\xi_{\alpha}V^{\alpha}1\in X_k$, we have
$$
\|\eta\|_{(\varXi_k)^*}=\sup_{\|\xi\|_{\varXi_k}\le1}|\langle\xi,\eta\rangle|
=2^k\sup_{x\in X_k,\,\|x\|_X\le1}|\langle x,y\rangle|=2^k\|y\|_{X^*}=\|2^k\eta\|_{(\varXi^*)_k}.
$$
From these observations it follows that inequalities \eqref{eq3.4a} can be rewritten as follows:
\begin{equation}\label{eq3.4}
A\|y\|_{X^*}\le\sup_{k\ge0}\biggl\|2^k\sum_{|\alpha|=k}\langle V^{\alpha}f,y\rangle V^{\alpha}1\biggr\|_{X^*}\le B\|y\|_{X^*}.
\end{equation}

Further, using Lemma \ref{l1} and taking into account normalization condition \eqref{eq3.1}, we infer that the sequence $\{P_{k,f}x\}$ converges weakly to $\langle f,1\rangle x$ for all $x\in X$. Therefore, the sequence $\{P_{k,f}^*y\}$ converges weakly* to $\langle f,1\rangle y$ for all $y\in X^*$ and hence
\begin{equation}\label{eq3.5}
|\langle f,1\rangle|\|y\|_{X^*}\le\liminf_{k\to\infty}\|P_{k,f}^*y\|_{X^*}.
\end{equation}
Moreover, one can easily check that
$$
P_{k,f}^*y=2^k\sum_{|\alpha|=k}\langle V^{\alpha}f,y\rangle V^{\alpha}1.
$$
Consequently, estimate \eqref{eq3.5} implies the left-hand side inequality in \eqref{eq3.4} with the constant $A=|\langle f,1\rangle|>0$.
In turn, the right-hand side inequality is a consequence of the hypothesis $f\in\mathscr{M}(X)$, because
$$\|P_{k,f}^*\|_{X^*\to X^*}=\|P_{k,f}\|_{X\to X},\;\;k=1,2,\dots,$$ and so the norms $\|P_{k,f}^*\|_{X^*\to X^*}$ are uniformly bounded. Finally, taking into account estimate \eqref{eq2.2}, we can take $B=\|f\|_{\mathscr{M}(X)}<\infty$.

%Верхнее фреймовое неравенство сразу получаем из того, что операторы $P_{k,f}^*$ равномерно ограничены вместе с $P_{k,f}$ в силу условия теоремы $f\in\mathscr{M}(X)$. Фактически, ввиду оценки \eqref{eq2.2}, мы можем взять $B=\|f\|_{\mathscr{M}(X)}<\infty$.
\end{proof}

\section{Representation of functions in symmetric spaces by dilations and translations.}

\begin{definition}\label{d2}
Let $X$ be a symmetric space on $[0,1]$. We say that the sequence of dilations and translations of a function $f\in X$ is an absolutely representing system in $X$ with the constant $C>0$ if for every $x\in X$ there exist coefficients $\{\xi_{\alpha}\}_{\alpha\in\mathbb{A}}$ such that we have
$$
x=\sum_{\alpha\in\mathbb{A}}\xi_{\alpha}V^{\alpha}f\;\;\mbox{and}\;\;
\sum_{k=0}^{\infty}\biggl\|\sum_{|\alpha|=k}\xi_{\alpha}V^{\alpha}f\biggr\|_X\le C\|x\|_X.
$$
\end{definition}

Observe that, for each fixed $k=0,1,2,\dots$, the functions $V^{\alpha}f$, $|\alpha|=k$, are pairwise disjoint. Therefore, the inequality from Definition~\ref{d2} guarantees that any absolutely representing system of dilations and translations, generated by a function $f$ from a symmetric space $X$, is unconditional.

The following main result of the paper establishes close connections between the multiplicator space of a separable symmetric space $X$ and representation properties of systems of dilations and translations generated by functions from $X$.

\begin{theorem}\label{t2}
Let $X$ be a separable symmetric space and let a function  $f\in X$ satisfy condition \eqref{main1}.

The following conditions are equivalent:

(i) $f\in\mathscr{M}(X)$;

(ii) for any $k_0\ge0$ the sequence $\{V^{\alpha}f\}_{|\alpha|\ge k_0}$ 
%of the system of dilations and translations of $f$ 
is an absolutely representing system in $X$ with a constant $C$ independent of $k_0$.
\end{theorem}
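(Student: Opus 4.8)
The plan is to prove both implications, and we may first normalize. Replacing $f$ by $f/\langle f,1\rangle$ changes neither membership in $\mathscr{M}(X)$ nor the absolutely representing constants (it only rescales the coefficients $\xi_\alpha$), so we may and do assume $\langle f,1\rangle=1$, i.e.\ \eqref{eq3.1} holds.

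\emph{(i)$\Rightarrow$(ii).} Fix $k_0\ge0$ and consider the tail system $\{V^\alpha f\}_{|\alpha|\ge k_0}$ together with the Banach sequence space $\varXi^{(k_0)}:=\bigl(\bigoplus_{k\ge k_0}\varXi_k\bigr)_{\ell^1}$. The key point is that the proof of Theorem~\ref{t1} goes through verbatim, with $\mathbb{A}$ replaced by $\{\alpha:|\alpha|\ge k_0\}$, and shows that $\{V^\alpha f\}_{|\alpha|\ge k_0}$ is a frame in $X$ with respect to $\varXi^{(k_0)}$ with frame bounds $A=1$ and $B=\|f\|_{\mathscr{M}(X)}$, \emph{both independent of $k_0$}. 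Indeed, the dual identity $(\varXi^{(k_0)})^*=\bigl(\bigoplus_{k\ge k_0}(\varXi_k)^*\bigr)_{\ell^\infty}$ together with \eqref{sequence} reduces the frame inequalities for $y\in X^*$ to
\[A\|y\|_{X^*}\le\sup_{k\ge k_0}\|P_{k,f}^*y\|_{X^*}\le B\|y\|_{X^*};\]
the right-hand estimate holds with $B=\|f\|_{\mathscr{M}(X)}$ because $\|P_{k,f}^*\|=\|P_{k,f}\|\le\|f\|_{\mathscr{M}(X)}$ for all $k$ by \eqref{eq2.2}, and the left-hand one holds with $A=1$ because, by Lemma~\ref{l1} and \eqref{eq3.1}, $P_{k,f}^*y\to y$ weakly${}^*$ as $k\to\infty$, whence $\sup_{k\ge k_0}\|P_{k,f}^*y\|_{X^*}\ge\liminf_{k\to\infty}\|P_{k,f}^*y\|_{X^*}\ge\|y\|_{X^*}$ --- a bound insensitive to the starting index $k_0$. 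Consequently the synthesis operator $S_{k_0}\xi:=\sum_{|\alpha|\ge k_0}\xi_\alpha V^\alpha f$ is bounded from $\varXi^{(k_0)}$ into $X$ with $\|S_{k_0}\|\le\|f\|_{\mathscr{M}(X)}$ (by \eqref{eq2.2} and the triangle inequality), and its adjoint $S_{k_0}^*=R_{k_0}$ is bounded below by $A=1$. By the standard open mapping argument, every $x\in X$ then admits $\xi\in\varXi^{(k_0)}$ with $x=S_{k_0}\xi$ and $\|\xi\|_{\varXi^{(k_0)}}\le2\|x\|_X$. Using \eqref{eq2.2} once more,
\[\sum_{k\ge k_0}\Bigl\|\sum_{|\alpha|=k}\xi_\alpha V^\alpha f\Bigr\|_X\le\|f\|_{\mathscr{M}(X)}\sum_{k\ge k_0}\Bigl\|\sum_{|\alpha|=k}\xi_\alpha V^\alpha1\Bigr\|_X=\|f\|_{\mathscr{M}(X)}\,\|\xi\|_{\varXi^{(k_0)}}\le2\|f\|_{\mathscr{M}(X)}\,\|x\|_X,\]
so $\{V^\alpha f\}_{|\alpha|\ge k_0}$ is an absolutely representing system with the constant $C=2\|f\|_{\mathscr{M}(X)}$, which does not depend on $k_0$.

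\emph{(ii)$\Rightarrow$(i).} Since $X$ is separable, by \cite[Corollary~2]{AMS03} it suffices to verify that $\sup_{k\ge0}\|P_{k,f}\|_{X\to X}<\infty$. As $P_{k,f}x$ depends only on $P_kx$ and $\bigl\|\sum_{|\alpha|=k}c_\alpha V^\alpha f\bigr\|_X=\bigl\|f\otimes\sum_{|\alpha|=k}c_\alpha V^\alpha1\bigr\|_{X(I\times I)}$, this is the same as proving $\bigl\|\sum_{|\alpha|=k}\xi_\alpha V^\alpha f\bigr\|_X\le\mathrm{const}\cdot\bigl\|\sum_{|\alpha|=k}\xi_\alpha V^\alpha1\bigr\|_X$ with a constant independent of $k$. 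The plan is to apply the hypothesis with $k_0=k$ to an arbitrary $u=\sum_{|\beta|=k}u_\beta V^\beta1\in X_k$: one obtains $u=\sum_{|\alpha|\ge k}\xi_\alpha V^\alpha f$ with $\sum_{l\ge k}\bigl\|\sum_{|\alpha|=l}\xi_\alpha V^\alpha f\bigr\|_X\le C\|u\|_X$; grouping this expansion by the first $k$ coordinates $\beta$ and using that the intervals $I_\beta$ are pairwise disjoint and each $V^\beta$ is injective, one deduces that $u_\beta\cdot\chi_{[0,1]}=\sum_{\gamma\in\mathbb{A}}\xi_{\beta\gamma}V^\gamma f$ in $X$ for every $\beta$ with $|\beta|=k$ --- that is, each value of $u$ is itself expanded by the system $\{V^\gamma f\}_{\gamma\in\mathbb{A}}$ with an $\ell^1$-control. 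Feeding this back together with \eqref{eq2.3}, the identity $P_{k,f}V^\beta x=V^\beta P_{k-|\beta|,f}x$ (established inside the proof of Lemma~\ref{l1}) and the equimeasurability of $\sum_{|\beta|=k}c_\beta V^\beta f$ with $f\otimes\sum_{|\beta|=k}c_\beta V^\beta1$, and then letting $k_0\to\infty$ to annihilate the low-frequency contributions, should yield the required uniform estimate, hence $f\in\mathscr{M}(X)$.

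The main obstacle is precisely this last step of (ii)$\Rightarrow$(i). The expansion furnished by the hypothesis is an expansion of $u$ itself relative to the ``base'' functions $V^\beta1$, whereas what must be bounded is the norm of $\sum_{|\beta|=k}u_\beta V^\beta f$, i.e.\ the same coefficients over the ``base'' $V^\beta f$; passing from one to the other amounts exactly to estimating the $X$-norm of a sum of disjointly supported rescaled copies of $f$ in terms of the norm of the corresponding step function, and doing so without circularly reintroducing $\|P_{k,f}\|$ is where the uniformity of $C$ over all $k_0$ (equivalently, representability using only arbitrarily high frequencies) must be exploited in an essential way. I expect the genuine work of the argument to lie here.
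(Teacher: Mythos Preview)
Your (i)$\Rightarrow$(ii) is correct and is essentially the paper's argument (the paper cites the injection/surjection duality from \cite[B.3.9]{Pie} and obtains the sharper constant $C=\|f\|_{\mathscr{M}(X)}|\langle f,1\rangle|^{-1}$ rather than your $2\|f\|_{\mathscr{M}(X)}$, but this is cosmetic).

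For (ii)$\Rightarrow$(i) you yourself flag the gap, and the gap is real. Your localization step does give $u_\beta\cdot1=\sum_{\gamma}\xi_{\beta\gamma}V^\gamma f$ for each $\beta$, but this only represents each \emph{scalar} $u_\beta$ and gives no handle on $\bigl\|\sum_{|\beta|=k}u_\beta V^\beta f\bigr\|_X$; the vague allusions to \eqref{eq2.3}, the commutation identity, and ``letting $k_0\to\infty$'' do not amount to an argument, and I do not see how to make the primal approach work along these lines.

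The paper's route is entirely different: it \emph{dualizes}. The point is to use the sequence space built from the $f$-norms, $\varXi_f^{[k_0]}=\bigl(\bigoplus_{k\ge k_0}\varXi_{k,f}\bigr)_{\ell^1}$, where $\|\{\xi_\alpha\}\|_{\varXi_{k,f}}=\bigl\|\sum_{|\alpha|=k}\xi_\alpha V^\alpha f\bigr\|_X$. Condition (ii) says the synthesis operator is a surjection $\varXi_f^{[k_0]}\twoheadrightarrow X$ with constant $C$, so the analysis operator $R=S^*$ is bounded below by $1/C$, i.e.
\[
\|y\|_{X^*}\le C\sup_{k\ge k_0}\ \sup_{\|\{\xi_\alpha\}\|_{\varXi_{k,f}}\le1}\Bigl|\sum_{|\alpha|=k}\xi_\alpha\langle V^\alpha f,y\rangle\Bigr|,\qquad y\in X^*.
\]
Now test with $y=\sum_{|\beta|=k_0}\eta_\beta V^\beta1$. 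Since $\langle V^{\beta\gamma}f,V^{\beta'}1\rangle=2^{-k}\langle f,1\rangle\,\delta_{\beta\beta'}$, the inner sup becomes $|\langle f,1\rangle|\sup_{\|\xi\|_{\varXi_{k,f}}\le1}\bigl|2^{-k}\sum_{|\beta|=k_0}\sum_{|\gamma|=k-k_0}\xi_{\beta\gamma}\eta_\beta\bigr|$. Because $\varXi_{k,f}$ is a symmetric sequence space, the sup is attained at $\xi_{\beta\gamma}=\xi_\beta$ constant in $\gamma$; and since $\sum_\beta\xi_\beta V^\beta W^{k-k_0}f$ is equimeasurable with $\sum_\beta\xi_\beta V^\beta f$, the constraint $\|\xi\|_{\varXi_{k,f}}\le1$ reduces to $\|\{\xi_\beta\}\|_{\varXi_{k_0,f}}\le1$. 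Thus the right-hand side is $C|\langle f,1\rangle|\cdot2^{-k_0}\|\eta\|_{(\varXi_{k_0,f})^*}$, \emph{independent of $k$}. Comparing with $\|y\|_{X^*}=2^{-k_0}\|\eta\|_{(\varXi_{k_0})^*}$ (formula \eqref{sequence}) gives the dual embedding $(\varXi_{k_0,f})^*\subset(\varXi_{k_0})^*$ with constant $C|\langle f,1\rangle|$, hence $\varXi_{k_0}\subset\varXi_{k_0,f}$ with the same constant --- which is exactly inequality \eqref{eq2.2} uniformly in $k_0$, whence $f\in\mathscr{M}(X)$.

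The ideas you were missing are: (a) work on the dual side; (b) build the sequence space from the $f$-norms $\varXi_{k,f}$, so that hypothesis (ii) translates directly into a lower bound for $R$; (c) test against step functions of the starting rank $k_0$ and use the symmetric-sequence-space structure plus equimeasurability to collapse the $\sup_{k\ge k_0}$ to a single term. This is where the uniformity over $k_0$ is truly used: the test function $y$ has rank $k_0$, and one needs the representation to begin at level $k_0$ so that only the ``diagonal'' pairings $\langle V^{\beta\gamma}f,V^\beta1\rangle$ survive.
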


\begin{proof}
$(i)\Rightarrow(ii)$. First of all, as above, the hypothesis implies inequality \eqref{eq3.5}. Furthermore, an inspection of the proof of Theorem \ref{t1} shows that, for each $k_0=0,1,2,\dots$, from \eqref{eq3.5} it follows
$$
A\|y\|_{X^*}\le\sup_{k\ge k_0}\biggl\|2^k\sum_{|\alpha|=k}\langle V^{\alpha}f,y\rangle V^{\alpha}1\biggr\|_{X^*}\le B\|y\|_{X^*},
$$
with the same constants $A=|\langle f,1\rangle|$ and $B=\|f\|_{\mathscr{M}(X)}$. This means that the analysis operator
\begin{equation}\label{analysis}
Ry:=\{\langle V^{\alpha}f,y\rangle\}_{|\alpha|\ge k_0}
\end{equation}
is an injection from $X^*$ into the Banach sequence space
$$
\biggl(\bigoplus_{k=k_0}^{\infty}(\varXi_k)^*\biggr)_{\ell^{\infty}}.
$$
Observe that $S^*=R$, where $S$ is the synthesis operator defined by
\begin{equation}\label{synthesis}
S\xi=\sum_{k=k_0}^{\infty}\sum_{|\alpha|=k}\xi_{\alpha}V^{\alpha}f.
\end{equation}
%$$
%S\xi=\sum_{k=k_0}\sum_{|\alpha|=k}\xi_{\alpha}V^{\alpha}f.
%$$
Therefore, by the well-known duality of injection and surjection properties \cite[B.3.9]{Pie}, $S$ is a surjection from the pre-dual Banach sequence space
$$
\varXi^{[k_0]}=\biggl(\bigoplus_{k=k_0}^{\infty}\varXi_k\biggr)_{\ell^1}
$$
onto $X$. Thus, for every $x\in X$ there is a sequence $\xi$ such that $x=S\xi$ and $\|\xi\|_{\varXi^{[k_0]}}\le A^{-1}\|x\|_X$. Equivalently, by estimate \eqref{eq2.2}, each function $x\in X$ admits a representation
$$
x=\sum_{k=k_0}^{\infty}\sum_{|\alpha|=k}\xi_{\alpha}V^{\alpha}f,
$$
with coefficients satisfying
$$
\sum_{k=k_0}^{\infty}\biggl\|\sum_{|\alpha|=k}\xi_{\alpha}V^{\alpha}f\biggr\|_X\le
\|f\|_{\mathscr{M}(X)}\sum_{k=k_0}^{\infty}\biggl\|\sum_{|\alpha|=k}\xi_{\alpha}V^{\alpha}1\biggr\|_X
=B\|\xi\|_{\varXi^{[k_0]}}\le BA^{-1}\|x\|_X.
$$
So, $\{V^{\alpha}f\}_{|\alpha|\ge k_0}$ is an absolutely representing system for $X$ with the constant
$C=BA^{-1}:=\|f\|_{\mathscr{M}(X)}|\langle f,1\rangle|^{-1}$.

$(ii)\Rightarrow(i)$. Let us repeat reasoning from the first part of the proof but in the opposite direction, replacing the space $\varXi^{[k_0]}$ (independent of $f$) by the space
$$
\varXi_f^{[k_0]}:=\biggl(\bigoplus_{k=k_0}^{\infty}\varXi_{k,f}\biggr)_{\ell^1},
$$
where $\varXi_{k,f}$ are coordinate copies of the subspaces $X_{k,f}$. Then condition $(ii)$ means that the synthesis operator $S$ (see \eqref{synthesis})
%$$
%S\xi=\sum_{k=k_0}^{\infty}\sum_{|\alpha|=k}\xi_{\alpha}V^{\alpha}f
%$$
is an surjection from $\varXi_f^{[k_0]}$ onto $X$, and for any $x\in X$ there exists a sequence $\xi$ such that $x=S\xi$
and $\|\xi\|_{\varXi_f^{[k_0]}}\le C\|x\|_X$, where $C$ does not depend on $k_0$. Observe that $S^*=R$, where $R$ is the analysis operator \eqref{analysis}.
%$$
%Ry=\{\langle V^{\alpha}f,y\rangle\}_{|\alpha|\ge k_0}.
%$$
Consequently, $R$ is an injection from $X^*$ into $\big(\varXi_f^{[k_0]}\big)^*$ and $\|y\|_{X^*}\le C\|Ry\|_{(\varXi_f^{[k_0]})^*}$ for all $y\in X^*$ \cite[B.3.9]{Pie}. Equivalently, the last inequality can be rewritten as follows:
\begin{equation}\label{eq3.6}
\|y\|_{X^*}\le C\sup_{k\ge k_0}
\sup_{\|\{\xi_{\alpha}\}_{|\alpha|=k}\|_{\varXi_{k,f}}\le1}\biggl|\sum_{|\alpha|=k}\xi_{\alpha}\langle V^{\alpha}f,y\rangle\biggr|.
\end{equation}

Further, given sequence $\eta=\{\eta_{\beta}\}_{|\beta|=k_0}$ we set $y=\sum_{|\beta|=k_0}\eta_{\beta}V^{\beta}1$.
Since
$$
\langle V^{\alpha}f,V^{\beta}1\rangle=2^{-k_0}\langle V^{\gamma}f,1\rangle=2^{-k}\langle f,1\rangle$$
if $\alpha=\beta\gamma$, $|\gamma|=k-k_0$, and $\langle V^{\alpha}f,V^{\beta}1\rangle=0$, otherwise, we have
\begin{equation}\label{tecnique}
\sum_{|\alpha|=k}\xi_{\alpha}\langle V^{\alpha}f,y\rangle
=\sum_{|\alpha|=k}\sum_{|\beta|=k_0}\xi_{\alpha}\eta_{\beta}\langle V^{\alpha}f,V^{\beta}1\rangle
=\frac{\langle f,1\rangle}{2^k}\sum_{|\beta|=k_0}\sum_{|\gamma|=k-k_0}\xi_{\beta\gamma}\eta_{\beta}.
\end{equation}
%--- здесь учли, что при $\alpha=\beta\gamma$, $|\gamma|=k-k_0$, имеем
%$\langle V^{\alpha}f,V^{\beta}1\rangle=2^{-k_0}\langle V^{\gamma}f,1\rangle=2^{-k}\langle f,1\rangle$ и во всех остальных случаях
%$\langle V^{\alpha}f,V^{\beta}1\rangle=0$.
Observe that $\varXi_{k,f}$ is a symmetric sequence space for each $k\in\mathbb{N}$. Therefore, similarly as in the proof of Theorem \ref{t1}, when calculating
$$
\sup_{\|\{\xi_{\alpha}\}_{|\alpha|=k}\|_{\varXi_{k,f}}\le1}
\biggl|\frac{1}{2^k}\sum_{|\beta|=k_0}\sum_{|\gamma|=k-k_0}\xi_{\beta\gamma}\eta_{\beta}\biggr|,
$$
we can additionally assume that
%it is sufficient to consider sequences $\{\xi_{\alpha}\}_{|\alpha|=k}$, with $\|\{\xi_{\alpha}\}_{|\alpha|=k}\|_{\varXi_{k,f}}\le1$, satisfying  
$\xi_{\beta\gamma}=\xi_{\beta}$ for all multi-indices $\beta$ and $\gamma$ satisfying $|\beta|=k_0$ and $|\gamma|=k-k_0$. Observe that for such a sequence $\{\xi_{\alpha}\}_{|\alpha|=k}$ we have
$$
\sum_{|\alpha|=k}\xi_{\alpha}V^{\alpha}f
=\sum_{|\beta|=k_0}\xi_{\beta}\sum_{|\gamma|=k-k_0}V^{\beta\gamma}f
=\sum_{|\beta|=k_0}\xi_{\beta}V^{\beta}W^{k-k_0}f.
$$
Moreover, since the functions $f(t)$ and $W^{k-k_0}f(t)=f(2^{k-k_0}t \mod [0,1])$ are equimeasurable, the functions $\sum_{|\beta|=k_0}\xi_{\beta}V^{\beta}W^{k-k_0}f$ and $\sum_{|\beta|=k_0}\xi_{\beta}V^{\beta}f$ are equimeasurable as well. Therefore, we get
$$
\|\{\xi_{\alpha}\}_{|\alpha|=k}\|_{\varXi_{k,f}}=\biggl\|\sum_{|\alpha|=k}\xi_{\alpha}V^{\alpha}f\biggr\|_X
=\biggl\|\sum_{|\beta|=k_0}\xi_{\beta}V^{\beta}f\biggr\|_X=\|\{\xi_{\beta}\}_{|\beta|=k_0}\|_{\varXi_{k_0,f}}.
$$
From the above observations it follows that 
\begin{equation*}\label{eq3.7}
\sup_{\|\{\xi_{\alpha}\}_{|\alpha|=k}\|_{\varXi_{k,f}}\le1}
\biggl|\frac{1}{2^k}\sum_{|\beta|=k_0}\sum_{|\gamma|=k-k_0}\xi_{\beta\gamma}\eta_{\beta}\biggr|
=\sup_{\|\{\xi_{\beta}\}_{|\beta|=k_0}\|_{\varXi_{k_0,f}}\le1}
\biggl|\frac{1}{2^{k_0}}\sum_{|\beta|=k_0}\xi_{\beta}\eta_{\beta}\biggr|.
\end{equation*}
%Indeed, because the coordinate space $\varXi_{k,f}$, $k\in\mathbb{N}$,  is symmetric, similarly as in the proof of Theorem \ref{t1}, we can check that the maximum of the expression
%$$
%\Big|\sum_{|\beta|=k_0}\sum_{|\gamma|=k-k_0}\xi_{\beta\gamma}\eta_{\beta}\Big|,$$
%under the constraint $\|\{\xi_{\alpha}\}_{|\alpha|=k}\|_{\varXi_{k,f}}\le1$,
%is attained at a sequence $\{\xi_{\alpha}\}$ satisfying $\xi_{\beta\gamma}=\xi_{\beta}$.
%(обоснование этого утверждения аналогично тому, что в любом симметричном пространстве экстремальной функцией для ступенчатой функции снова является ступенчатая функция того же двоичного ранга --- см. доказательство теоремы \ref{t1}).
%Then, setting $\xi_{\beta\gamma}=\xi_{\beta}$, $|\gamma|=k-k_0$, we have
%$$
%\sum_{|\alpha|=k}\xi_{\alpha}V^{\alpha}f
%=\sum_{|\beta|=k_0}\xi_{\beta}\sum_{|\gamma|=k-k_0}V^{\beta\gamma}f
%=\sum_{|\beta|=k_0}\xi_{\beta}V^{\beta}W^{k-k_0}f.
%$$
%Moreover, since the functions $f(t)$ and $W^{k-k_0}f(t)=f(2^{k-k_0}t \mod [0,1])$ are equimeasurable, the same holds also for the functions $\sum_{|\beta|=k_0}\xi_{\beta}V^{\beta}W^{k-k_0}f$ and $\sum_{|\beta|=k_0}\xi_{\beta}V^{\beta}f$. Therefore, we get
%$$
%\|\{\xi_{\alpha}\}_{|\alpha|=k}\|_{\varXi_{k,f}}=\biggl\|\sum_{|\alpha|=k}\xi_{\alpha}V^{\alpha}f\biggr\|_X
%=\biggl\|\sum_{|\beta|=k_0}\xi_{\beta}V^{\beta}f\biggr\|_X=\|\{\xi_{\beta}\}_{|\beta|=k_0}\|_{\varXi_{k_0,f}},
%$$
%and \eqref{eq3.7} is proved.
%Now, substituting \eqref{eq3.7} in \eqref{eq3.6}, we have
Combining this together with \eqref{eq3.6} and \eqref{tecnique}, we obtain
$$
\|y\|_{X^*}\le C|\langle f,1\rangle|
\sup_{\|\{\xi_{\beta}\}_{|\beta|=k_0}\|_{\varXi_{k_0,f}}\le1}
\biggl|\frac{1}{2^{k_0}}\sum_{|\beta|=k_0}\xi_{\beta}\eta_{\beta}\biggr|
=\frac{C|\langle f,1\rangle|}{2^{k_0}}\|\eta\|_{(\varXi_{k_0,f})^*}.
$$
On the other hand, according to \eqref{sequence} (using the same notation), we have 
$$
\|y\|_{X^*}=\|\eta\|_{(\varXi^*)_{k_0}}=2^{-k_0}\|\eta\|_{(\varXi_{k_0})^*}.$$
Therefore, from the preceding inequality it follows that $(\varXi_{k_0,f})^*\subset(\varXi_{k_0})^*$ with the constant $C|\langle f,1\rangle|$, whence $\varXi_{k_0}\subset\varXi_{k_0,f}$, with the same constant. Equivalently, for every $k_0\in\mathbb{N}$ we get the estimate
$$
\biggl\|\sum_{|\beta|=k_0}\xi_{\beta}V^{\beta}f\biggr\|_X\le C|\langle f,1\rangle|
\biggl\|\sum_{|\beta|=k_0}\xi_{\beta}V^{\beta}1\biggr\|_X,
$$
and so $f\in\mathscr{M}(X)$ and
$\|f\|_{\mathscr{M}(X)}\le C|\langle f,1\rangle|$ (see the discussion related to inequality \eqref{eq2.2} in Section~\ref{Prel}d).
\end{proof}

%{\color{red}
\begin{remark}
Since $\mathscr{M}(L_p)=L_p$, $1\le p\le\infty$, the Filippov-Osvald theorem from the paper \cite{FO95} (see Section~\ref{Intro}) is a very special case of Theorem \ref{t2}. Observe also that, similarly as in the latter theorem, condition \eqref{main2} assures that the sequence of dilations and translations $\{f_{k,i}\}$ of a function $f\in L_p$, $\int_0^1f(t)\,dt\neq0$, is an  absolutely representing system in $L_p$, i.e., for each $x\in L_p$ there is a sequence of coefficients $\{\xi_{k,i}\}$ such that
$$
x=\sum_{k=0}^{\infty}\sum_{i=0}^{2^k-1}\xi_{k,i}f_{k,i}\;\;\mbox{and}\;\;\sum_{k=0}^{\infty}\Big\|\sum_{i=0}^{2^k-1}\xi_{k,i}f_{k,i}\Big\|_{L_p}<\infty.$$
\end{remark}

Using Theorem \ref{t2} and the results on multiplicator spaces for the main classes of symmetric spaces listed in Section~\ref{Prel}b (see also the discussion at the beginning of Section~\ref{Frame} and \cite[Theorem~5]{AMS03}), we obtain

\begin{corollary}
If $X$ is a symmetric space, then the sequence of dilations and translations of every function $f$, $\int_0^1f(t)\,dt\neq0$, from the Lorentz space
$\varLambda_{\varphi}$, where $\varphi(t)=\|\sigma_t\|_{X\to X}$, $0<t\le1$, is an absolutely representing system in $X$.
\end{corollary}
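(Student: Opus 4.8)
The plan is to read the corollary off from Theorem~\ref{t2} together with the inclusion $\varLambda_\varphi\subset\mathscr M(X)$, $\varphi(t)=\|\sigma_t\|_{X\to X}$, recorded in Section~\ref{Prel}b; in effect there is almost nothing left to prove once those are in place.

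First I would fix the symmetric space $X$ (separable, as in Theorem~\ref{t2}) and set $\varphi(t)=\|\sigma_t\|_{X\to X}$, $0<t\le1$, understood --- as is standard, and as is already implicit in Section~\ref{Prel}b --- with $\varphi$ replaced if necessary by an equivalent concave function, so that the Lorentz space $\varLambda_\varphi$ makes sense. Given $f\in\varLambda_\varphi$ with $\int_0^1f(t)\,dt\ne0$, I would invoke the embedding $\varLambda_\varphi\subset\mathscr M(X)$, valid for every symmetric space (Section~\ref{Prel}b), to conclude $f\in\mathscr M(X)$; and since $\mathscr M(X)\subset X$ --- one checks this by testing the operator $B_f$ on $\chi_{[0,1]}$: then $(f\otimes\chi_{[0,1]})^{\circledast}=f^*$, so $\|f\|_X\le\|f\|_{\mathscr M(X)}$ --- we also have $f\in X$. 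At this point all the hypotheses of Theorem~\ref{t2} are satisfied: $f\in X$, $\langle f,1\rangle\ne0$, and $f\in\mathscr M(X)$.

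It then remains only to apply the implication $(i)\Rightarrow(ii)$ of Theorem~\ref{t2} with $k_0=0$, which gives at once that $\{V^\alpha f\}_{\alpha\in\mathbb A}$ is an absolutely representing system in $X$, with constant $C=\|f\|_{\mathscr M(X)}\,|\langle f,1\rangle|^{-1}$ --- precisely the assertion. The only substantive ingredient, and the sole place where something beyond Theorem~\ref{t2} is needed, is the identification (quoted from \cite{Ast97} in Section~\ref{Prel}b) of $\varphi(t)=\|\sigma_t\|_{X\to X}$ as exactly the function for which $\varLambda_\varphi$ is the largest Lorentz space contained in $\mathscr M(X)$; granting this, membership $f\in\varLambda_\varphi$ genuinely forces $f\in\mathscr M(X)$. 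As a complement I would also note that, since $\sigma_{st}=\sigma_s\sigma_t$ makes $\varphi$ submultiplicative, item~(i) of Section~\ref{Prel}b gives $\mathscr M(\varLambda_\varphi)=\varLambda_\varphi$, so that Theorem~\ref{t2} applied to $\varLambda_\varphi$ itself even yields an absolutely representing system for the same $f$ inside the smaller space $\varLambda_\varphi$.
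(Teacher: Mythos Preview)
Your proposal is correct and follows essentially the same route as the paper: the corollary is obtained directly from Theorem~\ref{t2} via the embedding $\varLambda_\varphi\subset\mathscr M(X)$ recorded in Section~\ref{Prel}b. Your added observations (that $\mathscr M(X)\subset X$, the explicit constant, and the remark on submultiplicativity of $\varphi$) are accurate supplements, but the core argument is exactly what the paper intends.
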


\begin{corollary}
(a) Let $\varphi$ be an increasing concave function on $[0,1]$, $\varphi(0)=0$, and let $\varphi'\otimes\varphi'\in M_{\varphi}(I\times I)$. Then, the sequence of dilations and translations of every function $f\in M^0_{\varphi}$, $\int_0^1f(t)\,dt\neq0$, is an absolutely representing system in the space $M^0_{\varphi}$. In particular, this holds if $\varphi(t)\le C\varphi(t^2)$, for some $C>0$ and all $0\le t\le1$.

(b) Let $\varPhi(t)$ be an increasing convex function on $[0,\infty)$, $\varPhi(0)=0$. Suppose there is $t_0>0$ such that $\varPhi(st)\le C\varPhi(s)\varPhi(t)$ for all $s,t\ge t_0$.
Then, the sequence of dilations and translations of every function $f\in L_{\varPhi}$, $\int_0^1f(t)\,dt\neq0$,
is an absolutely representing system in the Orlicz space $L_{\varPhi}$.
\end{corollary}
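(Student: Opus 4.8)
The plan is to obtain both assertions as immediate consequences of Theorem~\ref{t2}: for each of the two classes it suffices to check that the ambient space is a \emph{separable} symmetric space and that $f$ lies in the corresponding multiplicator space, and then to quote Theorem~\ref{t2} with $C=\|f\|_{\mathscr{M}(X)}|\langle f,1\rangle|^{-1}$.

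For part~(a) the space $M^0_{\varphi}$ is separable (Section~\ref{Prel}a), so the only thing to verify is that $f\in\mathscr{M}(M^0_{\varphi})$. The chain of implications I would use is the following: since $f\in M^0_{\varphi}\subset M_{\varphi}$ and, by item~(ii) of Section~\ref{Prel}b, the hypothesis $\varphi'\otimes\varphi'\in M_{\varphi}(I\times I)$ gives $\mathscr{M}(M_{\varphi})=M_{\varphi}$, we have $f\in\mathscr{M}(M_{\varphi})$; hence, by \cite[Theorem~2(i)]{AMS03}, $\sup_{k\ge0}\|P_{k,f}\|_{M_{\varphi}\to M_{\varphi}}<\infty$; since the $M^0_{\varphi}$- and $M_{\varphi}$-norms coincide on $M^0_{\varphi}$ and $P_{k,f}$ maps $M^0_{\varphi}$ into itself (dilations and translations preserve $M^0_{\varphi}$), this yields $\sup_{k\ge0}\|P_{k,f}\|_{M^0_{\varphi}\to M^0_{\varphi}}<\infty$; and finally, because $M^0_{\varphi}$ is separable, \cite[Corollary~2]{AMS03} (see the discussion at the beginning of Section~\ref{Frame}) upgrades the last bound to $f\in\mathscr{M}(M^0_{\varphi})$. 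Alternatively one may quote directly the identity $\mathscr{M}(M^0_{\varphi})=M^0_{\varphi}$, valid under this condition, which is contained in \cite[Theorem~5]{AMS03}. Theorem~\ref{t2} then applies to $X=M^0_{\varphi}$. The ``in particular'' clause requires nothing new, since $\varphi(t)\le C\varphi(t^2)$ implies $\varphi'\otimes\varphi'\in M_{\varphi}(I\times I)$, as recorded in item~(ii) of Section~\ref{Prel}b.

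For part~(b) the one extra point is the separability of $L_{\varPhi}$, that is, the $\Delta_2$-condition for $\varPhi$ at infinity. I would derive it from submultiplicativity as follows: after enlarging $t_0$ we may assume $t_0\ge2$, and then for every $u\ge t_0^2$ we write $2u=(2t_0)\cdot(u/t_0)$ with both factors $\ge t_0$, so that $\varPhi(2u)\le C\varPhi(2t_0)\varPhi(u/t_0)\le C\varPhi(2t_0)\varPhi(u)$; thus $\varPhi$ satisfies the $\Delta_2$-condition at infinity and $L_{\varPhi}$ is a separable symmetric space. By item~(iii) of Section~\ref{Prel}b the submultiplicativity of $\varPhi$ for large $t$ is equivalent to $\mathscr{M}(L_{\varPhi})=L_{\varPhi}$, so every $f\in L_{\varPhi}$ belongs to $\mathscr{M}(L_{\varPhi})$, and Theorem~\ref{t2} applied to $X=L_{\varPhi}$ finishes the proof.

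I expect the main --- and essentially the only --- obstacle to be the passage in part~(a) from the non-separable Marcinkiewicz space $M_{\varphi}$ to its separable part $M^0_{\varphi}$: Theorem~\ref{t2} is stated for separable symmetric spaces, so one genuinely has to transport the ``tensor multiplier'' property of $f$ from $M_{\varphi}$ down to $M^0_{\varphi}$, which is exactly what the combination of \cite[Theorem~2(i)]{AMS03} and \cite[Corollary~2]{AMS03} (or, in one stroke, \cite[Theorem~5]{AMS03}) provides. Everything else reduces to quoting the multiplicator descriptions collected in Section~\ref{Prel}b together with the short $\Delta_2$-verification above.
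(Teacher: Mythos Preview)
Your proposal is correct and follows exactly the route the paper indicates. The paper does not give a standalone proof of this corollary; it simply states that the result follows from Theorem~\ref{t2} together with the multiplicator descriptions in Section~\ref{Prel}b, the discussion at the start of Section~\ref{Frame}, and \cite[Theorem~5]{AMS03}. Your write-up is a faithful expansion of precisely that: for part~(a) you invoke \cite[Theorem~5]{AMS03} (or equivalently the $P_{k,f}$-boundedness route via \cite[Theorem~2(i) and Corollary~2]{AMS03}) to pass from $M_{\varphi}$ to its separable part $M^0_{\varphi}$, and for part~(b) you supply the one extra verification the paper leaves implicit, namely that submultiplicativity of $\varPhi$ at infinity forces the $\Delta_2$-condition and hence separability of $L_{\varPhi}$.
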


For Lorentz spaces it can be proved a more precise result by using the next theorem containing a useful necessary condition for a sequence of dilations and translations $\{V^{\alpha}f\}_{\alpha\in\mathbb{A}}$ to be an absolutely representing system (in contrast to Theorem \ref{t2}, without the additional requirement that each its tail part $\{V^{\alpha}f\}_{|\alpha|\ge k_0}$, $k_0=0,1,2,\dots$,  has this property as well).

\begin{theorem}\label{t3}
Let $X$ be a separable symmetric space and let the sequence of dilations and translations of a function $f\in X$, with $f=f^*$, be an absolutely representing system in $X$ with a constant $C$. Then, the following inequality holds
\begin{equation}\label{eq3.8}
\|\sigma_tf\|_X\le 2C|\langle f,1\rangle|\phi_X(t), \qquad 0<t\le1,
\end{equation}
where $\phi_X$ is the fundamental function of $X$.
\end{theorem}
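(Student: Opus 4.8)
The plan is to combine the synthesis/analysis duality used in the proof of Theorem~\ref{t2} with a single, carefully chosen test functional. Since $f=f^*\ge0$ we may assume $\langle f,1\rangle>0$ (if $f=0$ then \eqref{eq3.8} is trivial). By hypothesis the synthesis operator $S\colon\varXi_f\to X$, where $\varXi_f=\bigl(\bigoplus_{k\ge0}\varXi_{k,f}\bigr)_{\ell^1}$ and $\varXi_{k,f}$ is the coordinate copy of $X_{k,f}$, is a surjection for which every $x\in X$ has a preimage of norm $\le C\|x\|_X$; hence, by the duality of surjectivity and injectivity \cite[B.3.9]{Pie}, the analysis operator $R=S^*\colon X^*\to\varXi_f^*$ satisfies $\|y\|_{X^*}\le C\,\|Ry\|_{\varXi_f^*}=C\sup_{k\ge0}\|y\|_{(X_{k,f})^*}$ for every $y\in X^*$, where $\|y\|_{(X_{k,f})^*}$ denotes the norm of the restriction of $y$ to the subspace $X_{k,f}$; this uses, as in Theorem~\ref{t1}, that $\{\xi_\alpha\}_{|\alpha|=k}\mapsto\sum_{|\alpha|=k}\xi_\alpha V^\alpha f$ is an isometry of $\varXi_{k,f}$ onto $X_{k,f}$.

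Next I would fix $n\ge0$, let $\beta_0$ be the all-ones multi-index of length $n$, and test this inequality on $y=\chi_{I_{\beta_0}}$, the characteristic function of the \emph{rightmost} dyadic interval $I_{\beta_0}=[1-2^{-n},1]$ of rank $n$; using the rightmost rather than the leftmost interval is precisely what lets the monotonicity of $f$ enter. Since $y$ is equimeasurable with $\chi_{[0,2^{-n}]}$ we have $\|y\|_{X^*}=\phi_{X'}(2^{-n})=2^{-n}/\phi_X(2^{-n})$, so it suffices to prove
\[
\sup_{k\ge0}\|y\|_{(X_{k,f})^*}\le\frac{2^{-n}\langle f,1\rangle}{\|\sigma_{2^{-n}}f\|_X},
\]
which at once gives $\|\sigma_{2^{-n}}f\|_X\le C\langle f,1\rangle\,\phi_X(2^{-n})$.

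For $k\ge n$, of the components of a general $v=\sum_{|\alpha|=k}\xi_\alpha V^\alpha f\in X_{k,f}$ only the $2^{k-n}$ functions $V^{\beta_0\gamma}f$, $|\gamma|=k-n$, are supported inside $I_{\beta_0}$, so $\langle v,y\rangle=2^{-k}\langle f,1\rangle\sum_{|\gamma|=k-n}\xi_{\beta_0\gamma}$; averaging over permutations of the indices $\gamma$, which preserves $\|\cdot\|_X$ because the $V^{\beta_0\gamma}f$ are pairwise disjoint and equimeasurable, and applying the triangle inequality gives $\|v\|_X\ge\bigl\|\sum_{|\gamma|=k-n}\xi_{\beta_0\gamma}V^{\beta_0\gamma}f\bigr\|_X\ge 2^{-(k-n)}\bigl|\sum_{|\gamma|=k-n}\xi_{\beta_0\gamma}\bigr|\,\|\sigma_{2^{-n}}f\|_X$, since $\sum_{|\gamma|=k-n}V^{\beta_0\gamma}f=V^{\beta_0}W^{k-n}f$ is equimeasurable with $\sigma_{2^{-n}}f$; combining the last two facts yields the required bound for these $k$. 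For $k<n$, the interval $I_{\beta_0}$ lies inside the single rank-$k$ dyadic interval $[1-2^{-k},1]$, so only one component of $v$ contributes, with $\langle v,y\rangle=\xi\cdot 2^{-k}\int_{1-2^{-(n-k)}}^{1}f$ where $\xi$ is the coefficient of that component, while $\|v\|_X\ge|\xi|\,\|\sigma_{2^{-k}}f\|_X$; here the monotonicity $f=f^*$ gives $\int_{1-2^{-(n-k)}}^{1}f\le 2^{-(n-k)}\langle f,1\rangle$, and $\sigma_{2^{-n}}=\sigma_{2^{-(n-k)}}\sigma_{2^{-k}}$ together with $\|\sigma_{2^{-(n-k)}}\|_{X\to X}\le1$ gives $\|\sigma_{2^{-k}}f\|_X\ge\|\sigma_{2^{-n}}f\|_X$, so once more $\|y\|_{(X_{k,f})^*}\le 2^{-n}\langle f,1\rangle/\|\sigma_{2^{-n}}f\|_X$. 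This establishes \eqref{eq3.8} for $t=2^{-n}$; for arbitrary $t\in(0,1]$ one chooses $n$ with $2^{-n-1}<t\le2^{-n}$ and combines $\|\sigma_t f\|_X\le\|\sigma_{2^{-n}}f\|_X$ (because $\sigma_t=\sigma_{2^n t}\sigma_{2^{-n}}$ and $\|\sigma_{2^n t}\|_{X\to X}\le1$) with $\phi_X(2^{-n})\le2\phi_X(2^{-n-1})\le2\phi_X(t)$, which follows from the quasi-concavity of $\phi_X$; this accounts for the factor $2$ in \eqref{eq3.8}.

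The step I expect to require the most care is the pair of lower estimates for $\|v\|_X$: for $k\ge n$, the permutation-averaging inequality (which takes the place of the ``equal split minimizes the norm'' argument used in Theorem~\ref{t2}); and, more conceptually, the realization that one has to probe with the rightmost dyadic interval — testing against $[0,2^{-n}]$ instead would produce, for $k<n$, terms involving $\int_0^{2^{-(n-k)}}f$, which can be arbitrarily large when $f$ is unbounded near the origin, so that the argument would not close.
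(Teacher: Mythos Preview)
Your proof is correct and follows essentially the same route as the paper: the analysis/synthesis duality from Theorem~\ref{t2}, the test functional $y=\chi_{[1-2^{-n},1]}$, the split into the cases $k\ge n$ and $k<n$ with the monotonicity of $f$ used in the latter, and the quasi-concavity of $\phi_X$ to pass from dyadic $t$ to arbitrary $t$. The only cosmetic difference is that for $k\ge n$ the paper invokes the symmetric-sequence-space reduction established in the proof of Theorem~\ref{t2} (your ``equal split'' remark), whereas you prove the same lower bound on $\|v\|_X$ directly by permutation averaging; these are two phrasings of the same convexity argument.
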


\begin{proof}
First of all, in the same way as in the proof of the second part of Theorem \ref{t2}, we can prove inequality \eqref{eq3.6} but now only in the case when $k_0=0$, i.e.,
$$
\|y\|_{X^*}\le C\sup_{k\ge0}
\sup_{\|\{\xi_{\alpha}\}_{|\alpha|=k}\|_{\varXi_{k,f}}\le1}\biggl|\sum_{|\alpha|=k}\xi_{\alpha}\langle V^{\alpha}f,y\rangle\biggr|.
$$
Substituting here $y=V_1^n1=\chi_{[1-{2^{-n}},1]}$, we get
$$
\phi_{X^*}({2^{-n}})\le C\sup_{k\ge0}
\sup_{\|\{\xi_{\alpha}\}_{|\alpha|=k}\|_{\varXi_{k,f}}\le1}\biggl|\sum_{|\alpha|=k}\xi_{\alpha}\langle V^{\alpha}f,V_1^n1\rangle\biggr|.
$$
When calculating the right-hand side of this inequality, we consider two cases.

Firstly, let $k\ge n$.
Setting ${\bf1}_n:=\underbrace{(1,\dots,1)}_{n\; times}$, for $\alpha={\bf1}_n\gamma$, $|\gamma|=k-n$, we have
$\langle V^{\alpha}f,V_1^n1\rangle=2^{-k}|\langle f,1\rangle|$. Otherwise, $\langle V^{\alpha}f,V_1^n1\rangle=0$. Therefore, using \eqref{eq3.7} in the case when $\eta({\bf1}_n)=1$ and $\eta_{\beta}=0$ for $\beta\ne{\bf1}_n$, $|\beta|=n$, and also taking into account that $\|V_1^nf\|_X=\|\sigma_{{2^{-n}}}f\|_X$, we obtain
%{\color{red} The notation ${\bf1}_n$ and $\eta({\bf1}_n)$ should be explained}
\begin{align*}
\sup_{\|\{\xi_{\alpha}\}_{|\alpha|=k}\|_{\varXi_{k,f}}\le1}\biggl|\sum_{|\alpha|=k}\xi_{\alpha}\langle V^{\alpha}f,V_1^n1\rangle\biggr|
=|\langle f,1\rangle|
\sup_{\|\{\xi_{\alpha}\}_{|\alpha|=k}\|_{\varXi_{k,f}}\le1}\biggl|\frac{1}{2^k}\sum_{|\gamma|=k-n}\xi({\bf1}_n\gamma)\biggr|\\
=\frac{|\langle f,1\rangle|}{2^n}\sup_{\|\xi({\bf1}_n)V_1^nf\|_X\le1}|\xi({\bf1}_n)|
=\frac{|\langle f,1\rangle|}{2^n\|\sigma_{{2^{-n}}}f\|_X}
\end{align*}

Let now $k<n$. Then, since $f=f^*$, for $\alpha={\bf1}_k$ we have
$$
\langle V^{\alpha}f,V_1^n1\rangle=\frac{1}{2^k}\int_{1-{2^{k-n}}}^1f(t)\,dt\le\frac{|\langle f,1\rangle|}{2^n},
$$
and $\langle V^{\alpha}f,V_1^n1\rangle=0$ for $\alpha\ne {\bf1}_k$, $|\alpha|=k$.
As a result, for all $k<n$
\begin{align*}
\sup_{\|\{\xi_{\alpha}\}_{|\alpha|=k}\|_{\varXi_{k,f}}\le1}\biggl|\sum_{|\alpha|=k}\xi_{\alpha}\langle V^{\alpha}f,V_1^n1\rangle\biggr|
\le\frac{|\langle f,1\rangle|}{2^n}\sup_{\|\xi({\bf{1}_k}) V_1^kf\|_X\le1}|\xi({\bf1}_k)|\\
=\frac{|\langle f,1\rangle|}{2^n\|\sigma_{{2^{-k}}}f\|_X}
\le\frac{|\langle f,1\rangle|}{2^n\|\sigma_{{2^{-n}}}f\|_X}.
\end{align*}
Putting all together, we see that
$$
\phi_{X^*}({2^{-n}})\le\frac{C|\langle f,1\rangle|}{2^n\|\sigma_{{2^{-n}}}f\|_X}.
$$
By the condition, the space $X$ is separable and so $X^*=X'$. Therefore, from a connection between the fundamental functions of a symmetric space and its K\"{o}the dual (see Section~\ref{Prel}a) it follows
$$
\phi_X({2^{-n}})=\frac{1}{2^n\phi_{X^*}({2^{-n}})}\ge\frac{\|\sigma_{{2^{-n}}}f\|_X}{C|\langle f,1\rangle|}.
$$
Since the functions $\|\sigma_tf\|_X$ and $\phi_X(t)$, $0\le t\le 1$, are quasi-concave for every symmetric space $X$  \cite[Theorems~II.4.5 and II.4.7]{KPS}, applying the standard reasoning, we come to inequality \eqref{eq3.8}.
\end{proof}

By Theorem \ref{t3}, we are able to give necessary and sufficient conditions for the sequence of dilations and translations of a decreasing function $f$ from a Lorentz space $\varLambda_{\varphi}$  to be an absolutely representing system in $\varLambda_{\varphi}$. For this we will need the following property of Lorentz spaces \cite[Lemma II.5.2]{KPS}.

\begin{proposition}\label{p3}
If a convex functional $J:\varLambda_{\varphi}\to[0,\infty]$ is bounded on the set of characteristic functions, i.e., for some $C>0$ and all $E\subset [0,1]$ the inequality $J(\chi_E)\le C\varphi(mE)$ holds, then $J$ is bounded on the whole space $\varLambda_{\varphi}$.
\end{proposition}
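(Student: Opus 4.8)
The plan is to represent an arbitrary element of $\varLambda_{\varphi}$ as an average of characteristic functions against a probability measure — the ``layer-cake'' decomposition — and then to apply Jensen's inequality to the convex functional $J$. Throughout I use that $J$ is also positively homogeneous and lower semicontinuous; both are automatic in all our applications, where $J$ is in fact a norm. It suffices to bound $J(x)$ for $x\ge0$ (a general $x$ reduces to this case via its positive and negative parts); note first that $J(0)=J(\chi_{\emptyset})\le C\varphi(0)=0$. So fix $x\in\varLambda_{\varphi}$, $x\ge0$, and normalize $\|x\|_{\varLambda_{\varphi}}=1$; set $E_s:=\{t:x(t)>s\}$ and $\mu_s:=mE_s$ for $s>0$. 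The layer-cake formula gives $x=\int_0^{\infty}\chi_{E_s}\,ds$ pointwise, while a standard Fubini computation, together with the identity $x^*(t)=\int_0^{\infty}\chi_{\{\mu_s>t\}}\,ds$ and the definition of the Lorentz norm, yields the key relation
$$
\int_0^{\infty}\varphi(\mu_s)\,ds=\int_0^1 x^*(t)\,d\varphi(t)=\|x\|_{\varLambda_{\varphi}}=1 .
$$

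Hence $dP(s):=\varphi(\mu_s)\,ds$ is a probability measure, supported on $\{s:\mu_s>0\}$, and, putting $h_s:=\varphi(\mu_s)^{-1}\chi_{E_s}$, we get $x=\int_0^{\infty}h_s\,dP(s)$, i.e.\ $x$ is a convex combination of the functions $h_s$ with respect to $P$. Since $\|h_s\|_{\varLambda_{\varphi}}=\varphi(\mu_s)^{-1}\varphi(\mu_s)=1$ for $P$-a.e.\ $s$, this integral is well defined in $\varLambda_{\varphi}$; and from the hypothesis together with positive homogeneity of $J$ we obtain $J(h_s)=\varphi(\mu_s)^{-1}J(\chi_{E_s})\le\varphi(\mu_s)^{-1}\cdot C\varphi(\mu_s)=C$. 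Jensen's inequality for the convex functional $J$ then gives
$$
J(x)=J\!\left(\int_0^{\infty}h_s\,dP(s)\right)\le\int_0^{\infty}J(h_s)\,dP(s)\le C ,
$$
and undoing the normalization yields $J(x)\le C\|x\|_{\varLambda_{\varphi}}$, the asserted boundedness.

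The only genuinely delicate point — the main obstacle — is the justification of Jensen's inequality for an extended-real-valued convex functional on the infinite-dimensional space $\varLambda_{\varphi}$. One clean route is to recall that a convex lower semicontinuous functional is the pointwise supremum of the continuous affine functionals it majorizes, and to pass each of those through the Bochner integral $\int h_s\,dP(s)$. A more elementary alternative avoids vector-valued integration altogether: approximate $x$ from below by the finitely supported step functions $x_{n,K}:=n^{-1}\sum_{j=1}^{K}\chi_{E_{j/n}}$, which satisfy $0\le x_{n,K}\le x$ and converge to $x$ in $\varLambda_{\varphi}$ (first let $K\to\infty$, then $n\to\infty$; here the order continuity of the separable Lorentz norm is used). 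For these, subadditivity and homogeneity of $J$, the hypothesis, and monotonicity of $s\mapsto\varphi(\mu_s)$ give
$$
J(x_{n,K})\le\frac1n\sum_{j=1}^{K}J(\chi_{E_{j/n}})\le\frac Cn\sum_{j=1}^{K}\varphi(\mu_{j/n})\le C\int_0^{\infty}\varphi(\mu_s)\,ds=C\|x\|_{\varLambda_{\varphi}},
$$
and lower semicontinuity of $J$ yields $J(x)\le\liminf J(x_{n,K})\le C\|x\|_{\varLambda_{\varphi}}$. Either way the proof is complete; the heart of the matter is the layer-cake identity, which turns $x$ into a $P$-average of characteristic functions whose ``total mass'' is exactly $\|x\|_{\varLambda_{\varphi}}$.
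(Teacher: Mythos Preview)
The paper does not supply its own proof of this proposition; it simply quotes it as Lemma~II.5.2 of \cite{KPS}. Your argument is exactly the classical one behind that lemma: the layer-cake identity $\int_0^\infty\varphi(\mu_s)\,ds=\|x\|_{\varLambda_\varphi}$ shows that every nonnegative $x$ of unit norm is a convex average, against the probability measure $dP(s)=\varphi(\mu_s)\,ds$, of the normalized indicators $\varphi(\mu_s)^{-1}\chi_{E_s}$, and sublinearity then transfers the bound. Of your two routes, the discrete one via the Riemann sums $x_{n,K}$ is the cleaner, since it replaces vector-valued Jensen by a finite convex combination and an elementary monotone Riemann-sum estimate.

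You are also right to flag the extra hypotheses. With $J$ merely convex, as the proposition is literally stated, one cannot pass from $J(\chi_E)\le C\varphi(mE)$ to a bound on $J(\varphi(mE)^{-1}\chi_E)$, and the argument stalls at precisely the step you isolate; the intended hypothesis (and the one actually used in \cite{KPS}) is that $J$ be sublinear, i.e., positively homogeneous as well as convex, with some form of lower semicontinuity to pass from simple functions to the limit. None of this matters for the paper's sole application, where $J(x)=\|f\otimes x\|_{X(I\times I)}$ is a seminorm with the Fatou property, and you say so explicitly. In short: your proof is correct under the added hypotheses you name, those hypotheses are genuinely needed for the argument, and they are automatically satisfied where the proposition is invoked.
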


\begin{theorem}\label{c1}
Let $\varphi$ be an increasing concave function on $[0,1]$, $\varphi(0)=0$, and let $0\ne f\in\varLambda_{\varphi}$, $f^*=f$.
Then, the sequence of dilations and translations of $f$ is an absolutely representing system in the space $\varLambda_{\varphi}$ if and only if $f\in\mathscr{M}(\varLambda_{\varphi})$. 
\end{theorem}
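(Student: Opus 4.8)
The plan is to establish the two implications separately. The implication ``$f\in\mathscr{M}(\varLambda_{\varphi})$ $\Rightarrow$ absolutely representing system'' is immediate from Theorem~\ref{t2}, while the converse will be obtained from the necessary condition of Theorem~\ref{t3} combined with the special structure of Lorentz spaces recorded in Proposition~\ref{p3}.

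For the implication $f\in\mathscr{M}(\varLambda_{\varphi})\Rightarrow{}$(absolutely representing system), I would first observe that $\varLambda_{\varphi}$ is separable and that $\langle f,1\rangle=\int_0^1 f(t)\,dt>0$, since $f=f^*\ge0$ and $f\ne0$; in particular condition \eqref{main1} holds. Then Theorem~\ref{t2}, applied with $k_0=0$, shows that $\{V^{\alpha}f\}_{\alpha\in\mathbb{A}}$ is an absolutely representing system in $\varLambda_{\varphi}$.

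For the converse, assume $\{V^{\alpha}f\}_{\alpha\in\mathbb{A}}$ is an absolutely representing system in $\varLambda_{\varphi}$ with constant $C$. Since $f=f^*$, Theorem~\ref{t3} applies and yields $\|\sigma_t f\|_{\varLambda_{\varphi}}\le 2C\langle f,1\rangle\,\phi_{\varLambda_{\varphi}}(t)=2C\langle f,1\rangle\,\varphi(t)$ for $0<t\le1$, where we used $\phi_{\varLambda_{\varphi}}=\varphi$ and $\langle f,1\rangle>0$ as above. To deduce $f\in\mathscr{M}(\varLambda_{\varphi})$, i.e.\ boundedness of $B_f\colon y\mapsto f\otimes y$ from $\varLambda_{\varphi}$ into $\varLambda_{\varphi}(I\times I)$, I would introduce the functional $J(y):=\|f\otimes y\|_{\varLambda_{\varphi}(I\times I)}$, $y\in\varLambda_{\varphi}$, with values in $[0,\infty]$; since $B_f$ is linear and $\|\cdot\|_{\varLambda_{\varphi}(I\times I)}$ is a norm, $J$ is sublinear, hence convex, so Proposition~\ref{p3} is applicable provided $J$ is bounded on characteristic functions. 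Here the key computation is that for $E\subset[0,1]$ the function $(s,t)\mapsto f(s)\chi_E(t)$ has, with respect to the plane Lebesgue measure, distribution $m_2\{(s,t):f(s)\chi_E(t)>\tau\}=mE\cdot m\{s:f(s)>\tau\}$, so (using $f=f^*$ and $mE\le1$) its decreasing rearrangement equals $\sigma_{mE}f$; consequently $J(\chi_E)=\|\sigma_{mE}f\|_{\varLambda_{\varphi}}\le 2C\langle f,1\rangle\,\varphi(mE)$. Proposition~\ref{p3} then provides a constant $C'$ with $\|f\otimes y\|_{\varLambda_{\varphi}(I\times I)}=J(y)\le C'\|y\|_{\varLambda_{\varphi}}$ for all $y\in\varLambda_{\varphi}$, that is, $f\in\mathscr{M}(\varLambda_{\varphi})$ with $\|f\|_{\mathscr{M}(\varLambda_{\varphi})}\le C'$.

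The only nonroutine point — the main obstacle — is recognizing that the $\varLambda_{\varphi}(I\times I)$-norm of $f\otimes\chi_E$ is exactly $\|\sigma_{mE}f\|_{\varLambda_{\varphi}}$, which reduces the tensor-product estimate on characteristic functions precisely to the dilation estimate \eqref{eq3.8} supplied by Theorem~\ref{t3}; once this is in place, Proposition~\ref{p3} upgrades the bound from characteristic functions to all of $\varLambda_{\varphi}$ automatically, which is why the argument goes through for every Lorentz space without any additional hypothesis on $\varphi$.
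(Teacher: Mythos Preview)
Your proposal is correct and follows essentially the same route as the paper: sufficiency via Theorem~\ref{t2}, and necessity by combining the dilation estimate \eqref{eq3.8} from Theorem~\ref{t3} with the equimeasurability of $f\otimes\chi_E$ and $\sigma_{mE}f$, then invoking Proposition~\ref{p3} for the convex functional $J(y)=\|f\otimes y\|_{\varLambda_{\varphi}(I\times I)}$. Your additional remarks (separability of $\varLambda_{\varphi}$, positivity of $\langle f,1\rangle$ from $f=f^*\ne0$) simply make explicit what the paper leaves implicit.
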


\begin{proof}
In view of Theorem \ref{t2}, we need to prove only the necessity of the condition $f\in\mathscr{M}(\varLambda_{\varphi})$.
% and $\int_0^1f(t)\,dt\neq0$.

We consider the convex functional $J(x):=\|f\otimes x\|_{X(I\times I)}$, $x\in\varLambda_{\varphi}$. One can easily see that
$$
m_2\{s,t\in[0,1]:|f(s)|\chi_E(t)>\tau\}=m\{s\in[0,1]:|\sigma_{m(E)}f(s)|>\tau\},
$$
which means that the functions $f\otimes\chi_E$ and $\sigma_{m(E)}f$ are equimeasurable.
Consequently, from inequality \eqref{eq3.8} of Theorem \ref{t3} it follows
$$
J(\chi_E)=\|f\otimes\chi_E\|_{X(I\times I)}=\|\sigma_{m(E)}f\|_X\le C\varphi(m(E)).
$$
Finally, applying Proposition \ref{p3}, we conclude that the operator $B_fx=f\otimes x$ is bounded from $\varLambda_{\varphi}$ into $\varLambda_{\varphi}(I\times I)$, i.e., $f\in\mathscr{M}(\varLambda_{\varphi})$.
\end{proof}

Since a multiplicator space $\mathscr{M}(X)$ is symmetric and the tensor product is bounded from $\varLambda_{\varphi}\times \varLambda_{\varphi}$ into $\varLambda_{\varphi}(I\times I)$ if and only if the function $\varphi(t)$ is submultiplicative (see Section~\ref{Prel}b), we have

\begin{corollary}\label{c2}
Let $\varphi$ be an increasing convex function on $[0,1]$, $\varphi(0)=0$. The following conditions are equivalent:

(i) each function $f\in\varLambda_{\varphi}$,
$\int_0^1f(t)\,dt\neq0$, generates an absolutely representing system of dilations and translations in the Lorentz space $\varLambda_{\varphi}$;

(ii) the function $\varphi(t)$ is submultiplicative.
\end{corollary}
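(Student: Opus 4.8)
The plan is to derive Corollary~\ref{c2} by combining Theorem~\ref{t2}, Theorem~\ref{c1} and the criterion recalled in Section~\ref{Prel}b that $\mathscr{M}(\varLambda_{\varphi})=\varLambda_{\varphi}$ if and only if $\varphi$ is submultiplicative. Throughout we use that $\varLambda_{\varphi}$ is separable (Section~\ref{Prel}a), so that Theorems~\ref{t2} and \ref{c1} apply, together with the elementary observation that $\mathscr{M}(Y)\subseteq Y$ and $\|x\|_{Y}\le\|x\|_{\mathscr{M}(Y)}$ for every symmetric space $Y$: this follows at once from the definition of $\mathscr{M}(Y)$ by taking $y=\chi_{[0,1]}$, since then $B_xy=x\otimes\chi_{[0,1]}$ is equimeasurable with $x$ and hence $\|x\|_{Y}=\|x\otimes\chi_{[0,1]}\|_{Y(I\times I)}\le\|x\|_{\mathscr{M}(Y)}$.

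The implication $(ii)\Rightarrow(i)$ is immediate. If $\varphi$ is submultiplicative, then $\mathscr{M}(\varLambda_{\varphi})=\varLambda_{\varphi}$, so every $f\in\varLambda_{\varphi}$ lies in $\mathscr{M}(\varLambda_{\varphi})$; when, in addition, $\int_0^1 f(t)\,dt\ne 0$, the implication $(i)\Rightarrow(ii)$ of Theorem~\ref{t2}, applied with $k_0=0$, tells us that the system of dilations and translations of $f$ is an absolutely representing system in $\varLambda_{\varphi}$.

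For $(i)\Rightarrow(ii)$ the plan is to show that $(i)$ forces the inclusion $\varLambda_{\varphi}\subseteq\mathscr{M}(\varLambda_{\varphi})$ and then to upgrade it to the equality $\mathscr{M}(\varLambda_{\varphi})=\varLambda_{\varphi}$. To get the inclusion, take an arbitrary $g\in\varLambda_{\varphi}$ with $g\ne 0$ and put $f:=g^{*}$. Then $f\in\varLambda_{\varphi}$, $f^{*}=f$, $f\ne 0$, and, since $f$ is nonnegative, nonincreasing and not identically zero, $\int_0^1 f(t)\,dt>0$, so condition \eqref{main1} is automatically satisfied. By hypothesis $(i)$ the dilations and translations of $f$ form an absolutely representing system in $\varLambda_{\varphi}$ (with some constant), whence Theorem~\ref{c1} gives $f=g^{*}\in\mathscr{M}(\varLambda_{\varphi})$. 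Because $\mathscr{M}(\varLambda_{\varphi})$ is a symmetric space and $g$ is equimeasurable with $g^{*}$, we conclude $g\in\mathscr{M}(\varLambda_{\varphi})$; as $g$ was arbitrary, $\varLambda_{\varphi}\subseteq\mathscr{M}(\varLambda_{\varphi})$. Combined with $\mathscr{M}(\varLambda_{\varphi})\subseteq\varLambda_{\varphi}$ from the first paragraph, the two symmetric spaces coincide as sets; the natural inclusion $\varLambda_{\varphi}\to\mathscr{M}(\varLambda_{\varphi})$ then has closed graph (convergence in $\mathscr{M}(\varLambda_{\varphi})$ implies convergence in $\varLambda_{\varphi}$ by the norm inequality above), hence is bounded, and together with $\|\cdot\|_{\varLambda_{\varphi}}\le\|\cdot\|_{\mathscr{M}(\varLambda_{\varphi})}$ this yields $\mathscr{M}(\varLambda_{\varphi})=\varLambda_{\varphi}$ with equivalent norms. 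The criterion of Section~\ref{Prel}b now gives that $\varphi$ is submultiplicative.

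I do not anticipate a genuinely hard step; the corollary is essentially bookkeeping on top of Theorems~\ref{t2} and \ref{c1}. The mild points deserving attention are the reduction to nonincreasing $f$ (carried out through rearrangement invariance of $\mathscr{M}(\varLambda_{\varphi})$, which lets us pass freely between $g$ and $g^{*}$), the automatic validity of \eqref{main1} for nonzero nonnegative nonincreasing functions, and the upgrade from set equality of $\mathscr{M}(\varLambda_{\varphi})$ and $\varLambda_{\varphi}$ to equality of norms. If one wishes to avoid the closed graph theorem, one can instead argue quantitatively: feeding $f=\chi_{[0,s]}$ (for which $f^{*}=f$, $\langle f,1\rangle=s$ and $\sigma_t f=\chi_{[0,st]}$) into estimate \eqref{eq3.8} of Theorem~\ref{t3} gives $\varphi(st)\le 2Cs\,\varphi(t)\le 2C\varphi(s)\varphi(t)$ (using $s\le\varphi(s)$, a consequence of quasi-concavity and the normalization $\varphi(1)=1$), i.e., submultiplicativity of $\varphi$ directly — provided one has first extracted a representing constant $C$ uniform over these $f$, which is exactly what the closed graph argument supplies for free.
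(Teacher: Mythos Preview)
Your proof is correct and follows essentially the same route as the paper: both directions rest on Theorem~\ref{c1} (together with Theorem~\ref{t2} for the easy direction), the symmetry of $\mathscr{M}(X)$ to pass from $g^*$ back to $g$, and the criterion $\mathscr{M}(\varLambda_{\varphi})=\varLambda_{\varphi}\iff\varphi$ submultiplicative from Section~\ref{Prel}b. The closed graph detour is harmless but unnecessary, since the criterion in Section~\ref{Prel}b is stated for set equality, and your argument already yields $\varLambda_{\varphi}\subseteq\mathscr{M}(\varLambda_{\varphi})$ directly.
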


\section{A property of frames with respect to $\ell^1$-sums of finite-dimensional spaces.}

\begin{definition}\label{d1.1}
A frame $\{x_n\}_{n=1}^{\infty}$ in a Banach space $X$ with respect to a sequence space $\varDelta$ is said to be projective if there exist a Banach space $Y$ and a basis $\{e_n\}_{n=1}^{\infty}$ in the direct sum $X\times Y$, which is equivalent to the unit vector basis $\{\delta_n\}_{n=1}^{\infty}$ in $\varDelta$, such that $x_n=Pe_n$ for all $n=1,2,\dots$, where $P:X\times Y\to X$ is the canonical projection of $X\times Y$ onto $X$.
\end{definition}

Recall that each Duffin-Schaeffer frame is projective \cite{CHL99}. In the case of Banach spaces we can state the following criterion, which is a consequence of some general geometric principles (cf. \cite{Ter10}).

\begin{proposition}\label{p2}
Suppose $\{x_n\}_{n=1}^{\infty}$ is a frame in a Banach space $X$ with respect to a space $\varDelta$. The following conditions are equivalent:

(i) $\{x_n\}_{n=1}^{\infty}$ is a  projective frame;

(ii) there is a sequence $\{y_n\}_{n=1}^{\infty}\subset X^*$ such that for each $x\in X$ we have that
$\{\langle x,y_n\rangle\}_{n=1}^{\infty}\in\varDelta$ and $x=\sum_{n=1}^{\infty}\langle x,y_n\rangle x_n$;

(iii) the subspace $N(\varDelta)=\{\xi\in\varDelta:\sum_{n=1}^{\infty}\xi_nx_n=0\}$ is complemented in $\varDelta$.
\end{proposition}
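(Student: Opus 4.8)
The plan is to prove the equivalences by establishing the cycle $(i)\Rightarrow(ii)\Rightarrow(iii)\Rightarrow(i)$, relying throughout on the fact that, since $\{x_n\}_{n=1}^{\infty}$ is a frame, the synthesis operator $S:\varDelta\to X$, $S\xi=\sum_n\xi_nx_n$, is a well-defined bounded surjection (Proposition~\ref{p1}) with $\ker S=N(\varDelta)$. Recall also that the coordinate functionals $\xi\mapsto\xi_n$ are continuous on $\varDelta$ because $\{\delta_n\}$ is a basis there.

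For $(iii)\Rightarrow(i)$ I would set $Y:=N(\varDelta)$ (a closed subspace of $\varDelta$, hence a Banach space), take a bounded projection $Q:\varDelta\to\varDelta$ with range $N(\varDelta)$, and define $T:\varDelta\to X\times Y$ by $T\xi:=(S\xi,Q\xi)$. One checks that $T$ is injective (if $S\xi=0$ and $Q\xi=0$ then $\xi\in N(\varDelta)\cap\ker Q=\{0\}$) and surjective (given $(x,\nu)$, choose $\xi_0$ with $S\xi_0=x$ and put $\xi:=\xi_0-Q\xi_0+\nu$, using that $Q\xi_0,\nu\in N(\varDelta)=\ker S$); by the open mapping theorem $T$ is an isomorphism. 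Then $e_n:=T\delta_n$ is a basis of $X\times Y$ equivalent to $\{\delta_n\}$, and $Pe_n=S\delta_n=x_n$, so $\{x_n\}$ is projective in the sense of Definition~\ref{d1.1}. For $(i)\Rightarrow(ii)$, let $\{e_n\}\subset X\times Y$ be a basis equivalent to $\{\delta_n\}$ with $Pe_n=x_n$, and $\{e_n^{*}\}\subset(X\times Y)^{*}$ its coordinate functionals. Since $\{e_n\}$ is equivalent to $\{\delta_n\}$ and spans $X\times Y$, the coefficient sequence $\{\langle z,e_n^{*}\rangle\}_n$ of every $z\in X\times Y$ lies in $\varDelta$. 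Define $y_n\in X^{*}$ by $\langle x,y_n\rangle:=\langle(x,0),e_n^{*}\rangle$ (bounded, as the composition of the isometric embedding $x\mapsto(x,0)$ with $e_n^{*}$); applying $P$ to $(x,0)=\sum_n\langle(x,0),e_n^{*}\rangle e_n$ yields $x=\sum_n\langle x,y_n\rangle x_n$ with $\{\langle x,y_n\rangle\}_n\in\varDelta$, which is $(ii)$.

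For $(ii)\Rightarrow(iii)$ I would define $\Theta:X\to\varDelta$ by $\Theta x:=\{\langle x,y_n\rangle\}_n$. It is bounded by the closed graph theorem: if $x^{(k)}\to x$ in $X$ and $\Theta x^{(k)}\to\xi$ in $\varDelta$, then comparing $n$th coordinates (continuous on $\varDelta$, and $\langle\cdot,y_n\rangle$ continuous on $X$) gives $\xi_n=\langle x,y_n\rangle$, i.e. $\xi=\Theta x$. By hypothesis $S\Theta=\mathrm{id}_X$, so $\Theta$ is injective and $\Theta S:\varDelta\to\varDelta$ is a bounded projection; hence $\mathrm{id}_\varDelta-\Theta S$ is a bounded projection whose range is $\ker(\Theta S)=\ker S=N(\varDelta)$. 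Thus $N(\varDelta)$ is complemented in $\varDelta$, proving $(iii)$.

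None of these steps is deep; the argument is entirely "soft" functional analysis. The points that require care are the two places where completeness is invoked: that $T$ in $(iii)\Rightarrow(i)$ is an isomorphism (open mapping theorem) and that $\Theta$ in $(ii)\Rightarrow(iii)$ is bounded (closed graph theorem), together with the bookkeeping that the basis $\{e_n\}=T\delta_n$ produced in $(iii)\Rightarrow(i)$ is genuinely equivalent to $\{\delta_n\}$ and satisfies $Pe_n=x_n$ — all of which follow at once from $T$ being an isomorphism with $T\delta_n=e_n$ and $P\circ T=S$ on the first coordinate.
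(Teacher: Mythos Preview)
Your argument is correct: the cycle $(i)\Rightarrow(ii)\Rightarrow(iii)\Rightarrow(i)$ goes through exactly as you outline, with the open mapping theorem handling $(iii)\Rightarrow(i)$ and the closed graph theorem handling the boundedness of $\Theta$ in $(ii)\Rightarrow(iii)$; the small bookkeeping points (that $T$ is bijective, that $\mathrm{id}_\varDelta-\Theta S$ has range exactly $\ker S$) are all fine.

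There is, however, nothing to compare against in the paper itself. The authors do not prove Proposition~\ref{p2}; they state it as a known criterion ``which is a consequence of some general geometric principles'' and defer to \cite{Ter10}. So your proposal is not an alternative route but a self-contained proof of a result the paper merely quotes. That is perfectly appropriate here, and your argument is precisely the kind of soft functional-analytic reasoning the phrase ``general geometric principles'' is gesturing at.
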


In the proofs of Section~\ref{Frame} we made use of frames with respect to $\ell^1$-sums of finite-dimensional spaces.  It turns out that every such a frame in a symmetric space is not projective.

\begin{theorem}\label{l2}
Every frame in a symmetric space $X$ with respect to a $\ell^1$-sum of finite-dimensional spaces fails to be projective.
\end{theorem}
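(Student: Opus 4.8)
Assume, for contradiction, that $\{x_n\}_{n=1}^{\infty}$ is a projective frame in a symmetric space $X$ with respect to a space $\varDelta=\bigl(\bigoplus_{k=1}^{\infty}F_k\bigr)_{\ell^1}$ with $\dim F_k<\infty$. The plan is first to turn projectivity into an isomorphic embedding of $X$ into $\varDelta$, then to show $\varDelta$ has the Schur property, and finally to contradict this using the Rademacher functions. For the embedding: by Proposition~\ref{p1} the frame is a representing system, so the synthesis operator $S\xi=\sum_n\xi_nx_n$ maps $\varDelta$ onto $X$; since $\varDelta$ is separable, $X$ is separable and $X^*=X'$. By criterion~(ii) of Proposition~\ref{p2} there is a sequence $\{y_n\}\subset X^*$ with $\{\langle x,y_n\rangle\}_n\in\varDelta$ and $x=\sum_n\langle x,y_n\rangle x_n$ for every $x\in X$. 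Setting $Tx:=\{\langle x,y_n\rangle\}_n$, the closed graph theorem (coordinate evaluations are continuous on $X$, via $y_n\in X^*$, and on $\varDelta$) shows $T:X\to\varDelta$ is bounded, while $ST=\mathrm{id}_X$ gives $\|x\|_X\le\|S\|\,\|Tx\|_{\varDelta}$. Hence $T$ is an isomorphic embedding and $X$ is isomorphic to the closed subspace $T(X)$ of $\varDelta$.

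The core of the proof is the claim that $\varDelta=\bigl(\bigoplus_k F_k\bigr)_{\ell^1}$ has the Schur property; since the Schur property passes to closed subspaces, this forces $X\cong T(X)$ to be a Schur space. I would prove the claim by a gliding hump argument. Let $(\xi^{(m)})$ be weakly null in $\varDelta$ and suppose, after passing to a subsequence, that $\|\xi^{(m)}\|_{\varDelta}\ge\varepsilon>0$. For each fixed $k$ the coordinate projection $\pi_k:\varDelta\to F_k$ is weak-to-weak continuous, so $\pi_k\xi^{(m)}\to0$ weakly in $F_k$; as $\dim F_k<\infty$ this means $\|\pi_k\xi^{(m)}\|_{F_k}\to0$ as $m\to\infty$ for every fixed $k$. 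A standard diagonal selection then yields a subsequence $(\xi^{(m_j)})$ and integers $0=k_0<k_1<k_2<\cdots$ such that the block parts $u_j:=\sum_{k_{j-1}<k\le k_j}\pi_k\xi^{(m_j)}$ satisfy $\|\xi^{(m_j)}-u_j\|_{\varDelta}<\varepsilon/8$ and $\|u_j\|_{\varDelta}>\varepsilon/2$, and the $u_j$ are supported on pairwise disjoint coordinate blocks. Since $\varDelta^*=\bigl(\bigoplus_k F_k^*\bigr)_{\ell^{\infty}}$, one may pick norming functionals of norm $1$ on each block and glue them into a single $\phi\in\varDelta^*$ with $\|\phi\|=1$ and $\phi(u_j)=\|u_j\|_{\varDelta}$; then $\phi(\xi^{(m_j)})\ge\|u_j\|_{\varDelta}-\varepsilon/8>3\varepsilon/8$ for all $j$, contradicting $\phi(\xi^{(m)})\to0$. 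Hence $\varDelta$, and therefore $X$, is a Schur space.

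To finish, I would contradict this using the Rademacher functions $(r_n)$ on $[0,1]$. They lie in $X$, and since each $r_n$ is equimeasurable with $\chi_{[0,1]}$ we have $\|r_n\|_X=1$, while for $n\ne m$ one has $|r_n-r_m|=2\chi_{A_{n,m}}$ with $m(A_{n,m})=1/2$, so $\|r_n-r_m\|_X=2\phi_X(1/2)\ge1$. On the other hand $X^*=X'\subseteq L_1$, and $\langle r_n,y\rangle=\int_0^1 r_n(t)y(t)\,dt\to0$ for every $y\in X^*$ (approximate $y\in L_1$ by $L_\infty$ functions and use Bessel's inequality); thus $(r_n)$ is weakly null but not norm Cauchy, so $X$ is not Schur — the desired contradiction. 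I expect the gliding-hump step (that $\ell^1$-sums of finite-dimensional spaces are Schur) to be the only genuinely technical point, routine in spirit but needing care in the choice of blocks and of the gluing functional; the rest is bookkeeping with Propositions~\ref{p1} and \ref{p2} and the elementary properties of the fundamental function.
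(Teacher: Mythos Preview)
Your proof is correct and follows the same overall strategy as the paper's: from projectivity obtain an isomorphic copy of $X$ inside $\varDelta$, observe that $\varDelta$ has the Schur property, and contradict this with the fact that no symmetric function space is Schur. The only differences are in execution: the paper invokes criterion~(iii) of Proposition~\ref{p2} (complementation of the kernel) to get a subspace $M\subset\varDelta$ isomorphic to $X$, while you use criterion~(ii) to build an explicit right inverse $T$ of $S$; and where the paper cites Bourgain for the Schur property of $\varDelta$ and Kami\'nska--Masty{\l}o for the failure of Schur in symmetric spaces, you supply direct arguments (a gliding-hump proof and the Rademacher sequence, respectively). Your version is thus more self-contained, at the cost of some additional length; the paper's is shorter but leans on the literature.
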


\begin{proof}
On the contrary, assume that there exists a projective frame in a symmetric space $X$ with respect to a $\ell^1$-sum $\varDelta$ of finite-dimensional spaces. Then, according to Proposition \ref{p2}, the subspace $N=N(\varDelta)$ is complemented in $\varDelta$ and hence $\varDelta=N\oplus M$.
It is clear that the restriction of the surjective analysis operator $S:\varDelta\to X$ to the complementary subspace $M$ %with respect to the kernel $N(\varXi)=\ker(S)$
is an isomorphism from $M$ onto $X$, whence
%$M\cong X$.
$M$ is isomorphic to $X$. On the other hand, in \cite[p.~19]{Bou} Bourgain proved that an arbitrary $\ell_1$-sum of finite-dimensional Banach spaces possesses the Schur property (recall that a Banach space $Y$ has the Schur property if  weak convergence of a sequence in $Y$ implies its $Y$-norm convergence).
Therefore, the space $\varDelta$ as well its subspace $M$ has this property. At the same time, it is known (see \cite{KM00}) that every symmetric function space fails to have the Schur property. Thus, since the latter property is preserved under isomorphisms, we obtain a contradiction with the fact that $M$ is isomorphic to $X$.
\end{proof}

\begin{corollary}\label{c2}
There is no symmetric space $X$ such that for some function $f\in X$ and all $x\in X$ we have
\begin{equation}\label{eq4.1}
x=\sum_{\alpha\in\mathbb{A}}\langle x,g_{\alpha}\rangle V^{\alpha}f,\;\;\mbox{and}\;\;
\sum_{k=0}^{\infty}\biggl\|\sum_{|\alpha|=k}\langle x,g_{\alpha}\rangle V^{\alpha}f\biggr\|_X<\infty,
\end{equation}
with a fixed sequence $\{g_{\alpha}\}_{\alpha\in\mathbb{A}}\subset X^*$.
\end{corollary}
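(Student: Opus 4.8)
The plan is to obtain this as a straightforward consequence of Theorem~\ref{l2} together with Proposition~\ref{p2}. Arguing by contradiction, I would suppose that there exist a symmetric space $X$, a function $f\in X$ and a fixed sequence $\{g_{\alpha}\}_{\alpha\in\mathbb{A}}\subset X^*$ for which \eqref{eq4.1} holds for every $x\in X$. One may clearly assume $f\ne0$ (otherwise \eqref{eq4.1} would force $X=\{0\}$), and since the operators $V_0,V_1$ are injective on $X$, it follows that $V^{\alpha}f\ne0$ for every $\alpha\in\mathbb{A}$; thus $\{V^{\alpha}f\}_{\alpha\in\mathbb{A}}$ is a system of nonzero elements, as required in the definition of a frame.

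Next I would take as ambient sequence space
$$
\varXi_f:=\biggl(\bigoplus_{k=0}^{\infty}\varXi_{k,f}\biggr)_{\ell^1},
$$
exactly as in the proof of Theorem~\ref{t2}, where $\varXi_{k,f}$ is the $2^k$-dimensional coordinate copy of $X_{k,f}$ normed so that $\|\{\xi_{\alpha}\}_{|\alpha|=k}\|_{\varXi_{k,f}}=\|\sum_{|\alpha|=k}\xi_{\alpha}V^{\alpha}f\|_X$. This $\varXi_f$ is an $\ell^1$-sum of finite-dimensional spaces in which the unit vectors $\delta_{\alpha}$, ordered by increasing $|\alpha|$, form a basis. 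The summability condition in \eqref{eq4.1} says precisely that, for each $x\in X$, the coefficient sequence $\{\langle x,g_{\alpha}\rangle\}_{\alpha\in\mathbb{A}}$ lies in $\varXi_f$, and the first identity in \eqref{eq4.1} then gives $x=\sum_{\alpha\in\mathbb{A}}\langle x,g_{\alpha}\rangle V^{\alpha}f$. Consequently the synthesis operator $S\xi=\sum_{\alpha\in\mathbb{A}}\xi_{\alpha}V^{\alpha}f$ is well defined on all of $\varXi_f$ — for any $\xi\in\varXi_f$ the block norms $\|\sum_{|\alpha|=k}\xi_{\alpha}V^{\alpha}f\|_X$ are summable, so the series converges in $X$ — and it maps $\varXi_f$ onto $X$. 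By Proposition~\ref{p1}, $\{V^{\alpha}f\}_{\alpha\in\mathbb{A}}$ is then a frame in $X$ with respect to $\varXi_f$.

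To finish, I would apply Proposition~\ref{p2} with $y_{\alpha}:=g_{\alpha}$: condition $(ii)$ there is exactly the content of \eqref{eq4.1}, so the frame $\{V^{\alpha}f\}_{\alpha\in\mathbb{A}}$ is projective. Since $\varXi_f$ is an $\ell^1$-sum of finite-dimensional spaces, Theorem~\ref{l2} asserts that no frame in $X$ with respect to $\varXi_f$ can be projective, and we reach a contradiction. Hence no triple $(X,f,\{g_{\alpha}\})$ satisfying \eqref{eq4.1} exists.

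The proof is a bookkeeping reduction, so I do not anticipate a real obstacle; the only delicate point is to make sure the coefficient sequences from \eqref{eq4.1} are viewed in the right space, i.e.\ that the $k$-th block norm $\|\sum_{|\alpha|=k}\xi_{\alpha}V^{\alpha}f\|_X$ is indeed the norm of the finite-dimensional space $\varXi_{k,f}$ (true by construction), so that \eqref{eq4.1} really produces membership in an $\ell^1$-sum of finite-dimensional spaces and Theorem~\ref{l2} applies. It is also worth recalling that it is Proposition~\ref{p1} which allows us to upgrade ``convergence plus surjectivity of $S$'' to the full frame property, so that we never need to verify the two-sided estimate \eqref{eq2.1} directly.
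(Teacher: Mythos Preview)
Your proof is correct and follows essentially the same route as the paper: argue by contradiction, use Proposition~\ref{p2}(ii) to conclude that $\{V^{\alpha}f\}_{\alpha\in\mathbb{A}}$ would be a projective frame with respect to an $\ell^1$-sum of finite-dimensional coordinate spaces, and then derive a contradiction from Theorem~\ref{l2}. The only difference is in bookkeeping: the paper works with $\varXi=(\oplus_{k}\varXi_k)_{\ell^1}$ and invokes estimate~\eqref{eq2.3} to place the coefficient sequences there, whereas you work directly with $\varXi_f=(\oplus_{k}\varXi_{k,f})_{\ell^1}$, which is arguably cleaner since the summability condition in~\eqref{eq4.1} is precisely membership in $\varXi_f$ and no appeal to~\eqref{eq2.3} is needed.
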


\begin{proof}
Assuming the contrary, suppose that for a symmetric space $X$ there are a function $f\in X$ and a sequence $\{g_{\alpha}\}_{\alpha\in\mathbb{A}}\subset X^*$ such that for each $x\in X$ a representation of the form \eqref{eq4.1} exists. Then, by estimate \eqref{eq2.3} and Proposition \ref{p2}, the system of dilations and translations of $f$ is a projective frame in $X$ with respect to
$\varXi=(\oplus_{k=0}^{\infty}\varXi_k)_{\ell^1}$, where $\varXi_k$ are coordinate copies of the subspaces of dyadic step functions of rank $k$. Since this contradicts Proposition \ref{l2}, desired result follows.
\end{proof}

\section{Appendix}

Here, we show that condition \eqref{main2}, playing a central role in the proof of the Filippov--Oswald theorem \cite{FO95}, is not satisfied by Lorentz spaces on $[0,1]$ different from $L_1$. This is an immediate consequence of the following connection of \eqref{main2} with the smoothness of a separable symmetric space on $[0,1]$ at the function, identically equal to $1$. Recall that a Banach space $E$ is {\it smooth}  at an element $x_0\in E$, $\|x_0\|_E=1$, whenever there exists a unique $x^*\in E^*$ with $\|x^*\|_{E^*}=x^*(x_0)=1$.

\begin{proposition}\label{final prop}
Let $X$ be a separable symmetric space on $[0,1]$. Then, condition \eqref{main2} is fulfilled for each $f\in X$ such that $\int_0^1 f(t)\,dt\neq 0$ if and only if $X$ is smooth  at $1$.
\end{proposition}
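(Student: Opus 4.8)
The plan is to reduce the whole assertion to two elementary observations about the constant function $\mathbf 1:=\chi_{[0,1]}$ and the norming functionals of $X$ at it, where condition \eqref{main2} is read in its natural symmetric-space form: $\|\mathbf 1-\lambda_0 f\|_X<1=\|\mathbf 1\|_X$ for some $\lambda_0\in\mathbb R$. First I would record the crucial preliminary: since $\|\chi_{[0,1]}\|_X=1$ and $\|x\|_{L_1}\le\|x\|_X$ for all $x\in X$, the functional $x\mapsto\int_0^1x(t)\,dt$ lies in $X^*$, has norm $1$, and takes the value $1$ at $\mathbf 1$. Identifying this functional with $\mathbf 1$ itself, we see that $\mathbf 1$ is \emph{always} a norming functional of $X$ at the element $\mathbf 1$; consequently ``$X$ is smooth at $\mathbf 1$'' means exactly that $\mathbf 1$ is the \emph{only} such norming functional.

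For the sufficiency I would fix $f\in X$ with $\int_0^1 f(t)\,dt\ne0$ (condition \eqref{main1}) and examine the convex function $\psi(\lambda):=\|\mathbf 1-\lambda f\|_X$, $\lambda\in\mathbb R$, for which $\psi(0)=1$. By the standard description of the subdifferential of a norm, its one-sided derivatives at $0$ are $\psi'_+(0)=\sup\{\langle -f,y\rangle:y\in X^*,\ \|y\|_{X^*}=1,\ \langle\mathbf 1,y\rangle=1\}$ and $\psi'_-(0)$ the corresponding infimum. If $X$ is smooth at $\mathbf 1$ this set of norming functionals is $\{\mathbf 1\}$, so $\psi$ is differentiable at $0$ with $\psi'(0)=\langle -f,\mathbf 1\rangle=-\int_0^1 f(t)\,dt\ne0$; hence $\psi(\lambda_0)<1$ for a small $\lambda_0$ whose sign is opposite to that of $\psi'(0)$, which is precisely \eqref{main2}. (Alternatively, one may invoke the Smulian lemma: a unique norming functional at $\mathbf 1$ is equivalent to G\^ateaux differentiability of the norm there, with derivative $\mathbf 1$.)

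For the necessity I would argue by contraposition. If $X$ is not smooth at $\mathbf 1$, there is a norming functional $y_1\in X^*$, $\|y_1\|_{X^*}=\langle\mathbf 1,y_1\rangle=1$, with $y_1\ne\mathbf 1$. Then $y_1$ and $\mathbf 1$ are not proportional as functionals on $X$, since $\mathbf 1=c\,y_1$ would force $c=\langle\mathbf 1,\mathbf 1\rangle=1$ and hence $y_1=\mathbf 1$. Therefore $\ker y_1\not\subseteq\ker\mathbf 1$, and I can pick $f\in X$ with $\langle f,y_1\rangle=0$ but $\int_0^1 f(t)\,dt=\langle f,\mathbf 1\rangle\ne0$. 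For this $f$ and every $\lambda\in\mathbb R$ we obtain $\|\mathbf 1-\lambda f\|_X\ge\langle\mathbf 1-\lambda f,y_1\rangle=1$, so \eqref{main2} fails for $f$; thus the property of the Proposition fails, completing the contrapositive.

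The only step I expect to require some care is the identification, used in the sufficiency part, of the one-sided derivatives of $\lambda\mapsto\|\mathbf 1-\lambda f\|_X$ with the extrema of $\langle -f,\cdot\rangle$ over the set of norming functionals of $X$ at $\mathbf 1$ — equivalently, recognizing that smoothness at $\mathbf 1$ is nothing but G\^ateaux differentiability of the norm there. All the remaining ingredients — that $\mathbf 1$ is automatically a norming functional, the proportionality argument, and the one-line lower bound in the necessity part — are routine.
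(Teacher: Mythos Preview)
Your proof is correct and, in both directions, takes a different and somewhat cleaner route than the paper's.

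For the implication ``smooth at $\mathbf 1$ $\Rightarrow$ \eqref{main2} holds'', you use the subdifferential description of the norm (equivalently, the \v{S}mulian characterization of G\^ateaux differentiability) to identify $\psi'(0)$ with $-\int_0^1 f$ directly. The paper instead argues by contradiction: assuming $\inf_\lambda\|1-\lambda f\|_X=1$, it applies a Hahn--Banach duality formula and Banach--Alaoglu weak$^*$ compactness to produce a norming functional $\tilde y$ at $\mathbf 1$ with $\langle f,\tilde y\rangle=0$, then invokes smoothness to force $\tilde y=\mathbf 1$ and hence $\int f=0$. Your argument bypasses the compactness step by quoting the standard convex-analysis fact that the one-sided derivatives of the norm are the extrema of the directional pairing over the (automatically weak$^*$-compact) set of norming functionals; this is legitimate and shorter.

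For the implication ``not smooth $\Rightarrow$ \eqref{main2} fails for some $f$'', the paper uses \emph{two} distinct norming functionals $y_1,y_2$ and chooses $f$ with $\langle f,y_1\rangle>0>\langle f,y_2\rangle$, then treats $\lambda\le 0$ and $\lambda>0$ separately. You instead exploit the observation (which the paper also makes in the converse direction) that $\mathbf 1$ is itself always a norming functional; thus a single $y_1\ne\mathbf 1$ suffices, and choosing $f\in\ker y_1\setminus\ker\mathbf 1$ gives $\|1-\lambda f\|_X\ge\langle 1-\lambda f,y_1\rangle=1$ for all $\lambda$ in one stroke. This is a genuine simplification.
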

\begin{proof}
Firstly, let condition \eqref{main2} be fulfilled for each $f\in X$ such that $\int_0^1 f(t)\,dt\neq 0$. Assuming that $X$ is not smooth at $1$, we find two functions $y_1$ and $y_2$, $y_1\ne y_2$, from the dual space $X^*=X'$ such that
$$
\|y_1\|_{X^*}=\|y_2\|_{X^*}=\langle 1,y_1\rangle=\langle 1,y_2\rangle=1.$$
Let $f\in X$ be an arbitrary function such that $a:=\langle f,y_1\rangle>0$ and $b:=-\langle f,y_2\rangle>0$. Obviously, we can assume that $\int_0^1 f(t)\,dt\neq 0$. Then, we have
$$
\|1-\lambda f\|_{X}\ge\langle 1-\lambda f,y_1\rangle=1-\lambda a\ge 1$$
if $\lambda\le 0$, and similarly
$$
\|1-\lambda f\|_{X}\ge\langle 1-\lambda f,y_2\rangle=1+\lambda b\ge 1$$
if $\lambda>0$.
This contradicts the condition.

Conversely, suppose that $X$ is smooth at $1$ but, however, there is a function $f\in X$, $\int_0^1 f(t)\,dt\neq 0$, such that
$$
\|1-\lambda f\|_X\ge 1\;\;\mbox{for all}\;\lambda\in\mathbb{R}.$$
Then, clearly, the projection $P(a\cdot 1+b\cdot f):= a\cdot 1$, $a,b\in\mathbb{R}$, defined on the subspace, spanned by  $1$ and $f$, has norm $1$. Therefore, by Hahn-Banach Theorem, we have
$$
1=\inf_{\lambda\in\mathbb{R}}\|1-\lambda f\|_X=\inf_{\lambda\in\mathbb{R}}\sup_{\|y\|_{X^*}\le 1}|\langle 1-\lambda f,y\rangle|=
\sup_{\|y\|_{X^*}\le 1,\langle f,y\rangle=0}|\langle 1,y\rangle|.
$$
Hence, there exists a sequence $\{y_n\}\subset X^*=X'$ such that $\|y_n\|_{X^*}\le 1$, $\langle f,y_n\rangle=0$, $n=1,2,\dots$,  and $\langle 1,y_n\rangle\to 1$ as $n\to\infty$. Since the closed unit ball in $X^*$ is weakly$^*$ compact, we can find a subsequence $\{y_{n_k}\}\subset \{y_n\}$ such that $y_{n_k}\to \tilde{y}$ weakly$^*$
for some $\tilde{y}\in X^*$, $\|\tilde{y}\|_{X^*}\le 1$.
This implies that $\langle f,\tilde{y}\rangle=0$ and $\langle 1,\tilde{y}\rangle=1$.
On the other hand, since $\|x\|_1\le \|x\|_X$ (see
Section~\ref{Prel}a), we have
$$
\|1\|_{X^*}=\langle 1,1\rangle =1.$$
Therefore, taking into account that $X$ is smooth at $1$, from the preceding equations we deduce that $\tilde{y}(t)\equiv 1$ and so $\langle f,1\rangle=\int_0^1 f(t)\,dt=0$, which contradicts the hypothesis.

\end{proof}

%In fact, for this we use the simple fact that every such a Lorentz space is not {\it smooth} at the element $x_0(t)\equiv 1$ (i.e., there are at least two functionals $y_1^*$ and $y_2^*$, $y_1^*\ne y_2^*$, from the dual space such that $\|y_1^*\|=\|y_2^*\|=y_1^*(x_0)=y_2^*(x_0)=1$).

\begin{corollary}\label{l2}
Let $\varphi$ be an increasing convex function on $[0,1]$, $\varphi(0)=0$, $\varphi(1)=1$, and $\lim_{t\to 0}\varphi(t)/t=\infty$.
Then there is a function $f\in\varLambda_{\varphi}$ such that $\int_0^1 f(t)\,dt\neq 0$ and for each $\lambda\in\mathbb{R}$ we have
$$
\|1-\lambda f\|_{\varLambda_{\varphi}}\ge 1.$$
\end{corollary}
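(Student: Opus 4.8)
The plan is to reduce the statement to Proposition~\ref{final prop}. Since $\varphi$ is an increasing concave function with $\varphi(0)=0$ and $\lim_{t\to0}\varphi(t)/t=\infty$, the Lorentz space $\varLambda_{\varphi}$ is a separable symmetric space (we take $\varphi$ continuous at $0$, equivalently $\varLambda_{\varphi}$ separable); so, by Proposition~\ref{final prop}, it suffices to prove that $\varLambda_{\varphi}$ is \emph{not} smooth at the constant function $1$. Once this is established, Proposition~\ref{final prop} produces a function $f\in\varLambda_{\varphi}$ with $\int_0^1 f(t)\,dt\neq0$ for which there is no $\lambda_0\in\mathbb{R}$ with $\|1-\lambda_0 f\|_{\varLambda_{\varphi}}<1$, that is, $\|1-\lambda f\|_{\varLambda_{\varphi}}\ge1$ for every $\lambda\in\mathbb{R}$ --- precisely the conclusion sought.

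To show non-smoothness at $1$, I would exhibit two distinct norming functionals for $1$ in the dual $(\varLambda_{\varphi})^{*}=M_{\varphi}$. Concavity of $\varphi$ together with $\varphi(0)=0$, $\varphi(1)=1$ gives $\varphi(t)\ge t$ on $[0,1]$, so that $\|1\|_{\varLambda_{\varphi}}=\int_0^1 d\varphi(t)=\varphi(1)=1$ and $\|1\|_{M_{\varphi}}=\sup_{0<t\le 1}t/\varphi(t)=1$, the supremum being attained at $t=1$; combined with $\langle 1,1\rangle=1$ this shows that $y_1:=1$ is a norming functional for $1$. For the second one I would take $y_2:=\varphi'$, the a.e.\ defined derivative of $\varphi$: being the derivative of a concave increasing function, $\varphi'$ is nonnegative and nonincreasing, hence coincides with its own decreasing rearrangement, and $\int_0^t\varphi'(s)\,ds=\varphi(t)$ for $0<t\le 1$. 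Consequently $\|\varphi'\|_{M_{\varphi}}=\sup_{0<t\le 1}\varphi(t)^{-1}\int_0^t\varphi'(s)\,ds=1$ and $\langle 1,\varphi'\rangle=\int_0^1\varphi'(s)\,ds=\varphi(1)=1$, so $y_2$ is a norming functional for $1$ as well. Finally $y_1\neq y_2$ in $M_{\varphi}$: if $\varphi'\equiv 1$ almost everywhere, then $\varphi(t)=t$, contradicting $\lim_{t\to0}\varphi(t)/t=\infty$. Hence $1$ admits two distinct norming functionals, $\varLambda_{\varphi}$ is not smooth at $1$, and the result follows from Proposition~\ref{final prop}.

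The real content is the choice of the second functional $\varphi'$; all the computations with the Lorentz and Marcinkiewicz norms are routine. The only point needing a moment's care is the identity $\int_0^t\varphi'(s)\,ds=\varphi(t)$, which rests on the absolute continuity of the concave function $\varphi$ on $[0,1]$ --- valid even though $\varphi'(0^{+})=\lim_{t\to0}\varphi(t)/t=+\infty$, since $\varphi$ is continuous at $0$ with $\varphi(0)=0$. Beyond keeping track of this normalization, I foresee no genuine obstacle.
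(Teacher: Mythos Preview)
Your proof is correct and follows essentially the same approach as the paper: exhibit the two distinct norming functionals $y_1\equiv 1$ and $y_2=\varphi'$ in $M_{\varphi}=(\varLambda_{\varphi})^*$ to show that $\varLambda_{\varphi}$ is not smooth at $1$, and then invoke Proposition~\ref{final prop}. Your version is in fact more detailed than the paper's, spelling out the norm computations and the absolute continuity of $\varphi$ that the paper leaves to the reader.
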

\begin{proof}
Recall that isometrically $(\varLambda_{\varphi})^*=M_{\varphi}$, where $M_{\varphi}$ is the Marcinkiewicz space with the norm
$$
\|x\|_{M_{\varphi}}=\sup_{0<t\le1}\frac{1}{\varphi(t)}\int_0^tx^*(s)\,ds
$$
\cite[Theorem~II.5.2]{KPS}. One can easily check that from properties of $\varphi$ it follows that both functions $y_1(t)\equiv 1$ and $y_2(t)=\varphi'(t)$ belong to $M_{\varphi}$, $y_1\ne y_2$, and
$$
\|y_1\|_{M_{\varphi}}=\|y_2\|_{M_{\varphi}}=\langle 1,y_1\rangle=\langle 1,y_2\rangle=1.$$
This means that the space $\varLambda_{\varphi}$ is not smooth at $1$. Therefore, applying
Proposition~\ref{l2}, we get desired result.
%Let $f\in \varLambda_{\varphi}$ be an arbitrary function such that $a:=\langle f,y_1\rangle>0$ and $b:=-\langle %f,y_2\rangle>0$. Obviously, we can assume that $\int_0^1 f(t)\,dt\neq 0$. Then, we have
%$$
%\|1-\lambda f\|_{\varLambda_{\varphi}}\ge\langle 1-\lambda %f,y_1\rangle=1-\lambda a\ge 1$$
%if $\lambda\le 0$, and similarly
%$$
%\|1-\lambda f\|_{\varLambda_{\varphi}}\ge\langle 1-\lambda %f,y_2\rangle=1+\lambda b\ge 1$$
%if $\lambda>0$.
%This completes the proof.
\end{proof}

\begin{remark}\label{Rem2}
A careful inspection of the proof of Lemma~2 from the paper \cite{FO95} shows that, in fact, this proof is based on using the well-known Weak Greedy Algorithm. In the case of $L_p$, $1\le p<\infty$, everything that is needed to apply it is condition \eqref{main2}. However, if we try to prove an analogue of the Filippov--Oswald theorem for a general separable symmetric space $X$ on $[0,1]$, the following much more restrictive conditions are required:

(a) $f\in \mathscr{M}(X)$;

(b) $dist_{\mathscr{M}(X)}(1,X_{0,f})<1$;

(c) $\sup_{\|x\|_{\mathscr{M}(X)}\le 1}\liminf_{k\to\infty}dist_{\mathscr{M}(X)}(x,X_{k,f})<1$.

%\textcolor{blue}{
%(c') $\sup_{\|x\|_{X}\le 1}\liminf_{k\to\infty}dist_{X}%(x,X_{k,f})<1$.
%}
\noindent
Here, as before, $X_{k,f}=\text{span}((f_{\alpha})_{|\alpha|=k}])$, $k=0,1,2,\dots$, and for every Banach space $Y$, $L\subset Y$ and $y_0\in Y$ we set
$$
dist_{Y}(y_0,L):=\inf_{y\in L}\|y_0-y\|_Y.$$

%\textcolor{blue}{replace M to L?}

In contrast to that, according to Theorem \ref{t2}, the only condition $f\in \mathscr{M}(X)$ (together with \eqref{main1}) assures that the sequence of dyadic dilations and translations of $f$ is an absolutely representing system in the separable symmetric space $X$. Thus, we see that the frame approach, used in this paper, works under less restrictive conditions and so has wider applicability than the above Weak Greedy Algorithm, used in \cite{FO95} (cf. \cite{Sil08}).
\end{remark}

{\bf Acknowledgements.}
{\it The work of the first author was supported by the Ministry of Education and Science of the Russian Federation, project 1.470.2016/1.4 and by the RFBR grant 18-01-00414.

The work of the second author was supported by the RFBR grant 18-01-00414.}

\end{document}